\newtheorem{theorem}{Theorem}
\newtheorem{corollary}[theorem]{Corollary}
\newtheorem{lemma}[theorem]{Lemma}
\newtheorem{proposition}[theorem]{Proposition}
\newtheorem{remark}[theorem]{Remark}
\newcommand{\M}{\mathcal{M}}
\newcommand{\pd}{\partial}
\newcommand{\Ra}{\rightarrow}
\newcommand{\RR}{\mathbb{R}}
\newcommand{\ZZ}{\mathbb{Z}}
\protected\def\vts{%
  \ifmmode
    \mskip0.5\thinmuskip
  \else
    \ifhmode
      \kern0.08334em
    \fi
  \fi
}
\begin{document}
\title[Curvature of $4$-dimensional GRS singularity models]{Curvature
growth of
some $4$-dimensional gradient Ricci soliton singularity models}
\author[B. Chow]{Bennett Chow$^{\vts\text{a}}$}
\author[M. Freedman]{Michael Freedman$^{\vts\text{b}}$}
\author[H. Shin]{Henry Shin$^{\vts\text{a}}$}
\author[Y. Zhang]{Yongjia Zhang$^{\vts\text{a}}$}
\date{}

\address{\footnotesize{$^{\text{a}\vts}$\emph{Department of Mathematics, University of California, San Diego, California 92093, USA.}} }

\address{\footnotesize{$^{\text{b}\vts}$\emph{Station Q, Microsoft Research, Santa Barbara, California 93106, USA and Department of Mathematics, University of California, Santa Barbara, California 93106, USA.}} }

\keywords{Ricci flow, gradient Ricci soliton, singularity model, Ricci flat, asymptotically locally Euclidean manifold.}

\begin{abstract}
In this note
we discuss estimates for the curvature of 4-dimensional gradient Ricci soliton singularity models by applying Perelman's point selection, a fundamental result of Cheeger and Naber, and topological lemmas.
\end{abstract}

%\begin{nouppercase}
\maketitle
%\end{nouppercase}

\section{Introduction}
\subsection{Definition of a singularity model}
For a
finite-time
singular solution to the Ricci flow on a closed oriented manifold $(\mathcal{M}^{n},g(t))$, $t\in [0,T)$, $T<\infty$, we have $\sup_{M\times [0,T)} |\operatorname{Rm}|=\infty$.
An associated \emph{singularity
model} $(\mathcal{M}_{\infty}^{n},g_{\infty}(t))$, $t\in(-\infty,0]$, is a
complete ancient solution which is a limit of pointed rescalings.
More precisely,
there exists a sequence of space-time points $(x_{i},t_{i})$ in $\mathcal{M}\times [0,T)$ with $K_{i}\doteqdot
\left\vert \operatorname{Rm}\right\vert (x_{i},t_{i})\rightarrow\infty$ such
that the sequence of pointed solutions $(\mathcal{M},g_{i}(t),(x_{i},0))$, where $g_{i}%
(t)=K_{i}g(K_{i}^{-1}t+t_{i})$ and $t\in [-K_i t_i ,0]$, converges in the $C^{\infty}$ pointed
Cheeger--Gromov sense to the complete ancient solution $(\mathcal{M}_{\infty},g_{\infty}(t),(x_{\infty},0))$, $t\in (-\infty ,0]$, for
some $x_{\infty}\in\mathcal{M}_{\infty}$. Note that $g_{\infty}(t)$ is not assumed to have bounded curvature on each time slice.

A folklore conjecture is that any singularity model must have bounded curvature. In dimension $3$, this is true by the work of Perelman \cite{P1}.
Observe that it is not obvious that singularity models are necessarily of finite (topological) type. Neither is it obvious that singularity models are embeddable in the compact manifold from which they arise. However, if a singularity model has finite type, then it is embeddable in the original compact manifold and thus has an orientation induced by the embedding.

\subsection{Classification of $3$-dimensional singularity models}
There is now
a complete classification of 3-dimensional singularity models.
We shall use GRS as an abbreviation for gradient Ricci soliton.
Firstly, $3$-dimensional noncollapsed shrinking GRS with bounded curvature have been classified by Hamilton \cite{H95b} and by Perelman \cite{P2}, who proved nonexistence in the noncompact positive sectional curvature case.
Naber \cite{Naber} showed that shrinking GRS with bounded curvature must be noncollapsed.
It was shown that  $3$-dimensional shrinking GRS must have bounded curvature via the works of Cao, Chen, and Zhu
\cite{CCZ}
and Ni and Wallach \cite{NW}, with related works by Chen \cite{Chen} and Petersen and Wylie \cite{PW}. Secondly,
Brendle \cite{Brendle1} proved the assertion by Perelman that the only $3$-dimensional nonflat noncollapsed steady GRS is the rotationally symmetric Bryant soliton.
Thirdly, the works of Hamilton \cite{H95b}, Perelman \cite{P1}, and Brendle \cite{Brendle2}, prove that any $3$-dimensional singularity model must be either a shrinking GRS or a steady GRS. In particular, Brendle \cite{Brendle2} proved Perelman's conjecture that any $3$-dimensional noncollapsed ancient solution with positive bounded sectional curvature must be a steady GRS. See Bamler and Kleiner \cite{BK3} for a later, alternative
% proof
treatment
related to their work on strong stability of $3$-dimensional Ricci flow and their proof of the generalized Smale conjecture \cite{BK1,BK2}.
Finally, one
% can conclude
obtains
from Hamilton, Perelman, and Brendle's results that the possible $3$-dimensional oriented singularity models are classified as: a shrinking spherical space form $S^3/\Gamma$, a round cylinder $S^2 \times \RR$ or its $\ZZ_2$-quotient, or the Bryant soliton.

There has been much progress on the understanding of higher-dimensional shrinking GRS, largely due to the works of Munteanu and Wang (see e.g. \cite{MunteanuWang5,MunteanuWang6} and the references therein), with the strongest results in dimension $4$. Wylie \cite{Wylie} proved that the fundamental group is finite. Kotschwar and Wang proved an important uniqueness result \cite{KW}; see also \cite{Kot,KW2}. Haslhofer and M\"uller \cite{HM1,HM2} have proved precompactness theorems.
Y.\,Li and B.\,Wang \cite{LiWang} proved an improved no local collapsing theorem for shrinking GRS.

For recent progress on steady GRS, see Appleton \cite{Appleton1}, Brendle \cite{Brendle2014},
Cao and Cui \cite{CaoCui}, Deng and Zhu \cite{DZ2,DZ1,DZ},
Deruelle \cite{Deruelle},
Munteanu and Sesum \cite{MunSes}, and Munteanu, Sung, and Wang \cite{MSW}, Munteanu and Wang \cite{MunWan1,MunWan2,MunWan3}, and the references therein.
For the classification of $4$-dimensional shrinking GRS with positive isotropic curvature, see Li, Ni, and Wang \cite{LNW}.
Very recently, Chen, Deruelle, and Sun \cite{CDS} proved that any $4$-real-dimensional K\"ahler shrinking GRS with scalar curvature tending to zero at infinity must be the Feldman--Ilmanen--Knopf shrinker \cite{FIK}.

In the case where the potential function is constant, a steady GRS is
a Ricci-flat manifold. Assuming also that
this
manifold
is a singularity model, by Perelman's no local collapsing theorem it is necessarily $\kappa$-noncollapsed on all scales. In particular, it has Euclidean volume growth. In dimension 4, by a fundamental result of Cheeger and Naber \cite{CN}, it must be
a Ricci-flat asymptotically locally Euclidean (ALE) manifold. We remark that singularity models of 4-dimensional singular solutions with bounded scalar curvature have been shown to be such ALE manifolds by Bamler and Zhang \cite{BamlerZhang1} and Simon \cite{SimonMiles}.

\subsection{Main results}
In this paper we study
the curvature growth of
4-dimensional
GRS singularity models.

\begin{theorem}
Any $4$-dimensional steady GRS
singularity model $(\mathcal{M},g,f)$ must have bounded curvature, that is, there exists a constant $C$ depending on the GRS such that $|\operatorname{Rm}|\leq C$ on $\M$.
\end{theorem}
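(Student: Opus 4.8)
The plan is to argue by contradiction: if $|\operatorname{Rm}|$ were unbounded, a Perelman point-selection blow-up would produce a non-flat Ricci-flat ALE $4$-manifold, and arbitrarily many disjoint copies of its topologically essential part could then be embedded into the fixed closed $4$-manifold underlying the original singular flow, contradicting the finiteness of that manifold's topology.

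First I would reduce to the non--Ricci-flat case. If $(\mathcal{M},g)$ is Ricci-flat then, being a singularity model, it is $\kappa$-noncollapsed on all scales, hence has Euclidean volume growth, so by the cited result of Cheeger and Naber it is a Ricci-flat ALE $4$-manifold, whose curvature is bounded (indeed it decays); there is nothing to prove. So assume $\mathcal{M}$ is not Ricci-flat. Then $R+|\nabla f|^{2}$ is a positive constant, which I normalize to $1$; since $R\ge 0$ on the eternal Ricci flow generated by a steady GRS, this gives $0\le R\le 1$ and $|\nabla f|\le 1$ on $\mathcal{M}$. Recall also that $\mathcal{M}$, being a singularity model, is $\kappa$-noncollapsed on all scales and is a pointed $C^{\infty}$ Cheeger--Gromov limit of rescalings of a finite-time singular Ricci flow on some closed oriented $4$-manifold $N$.

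Now suppose $\sup_{\mathcal{M}}|\operatorname{Rm}|=\infty$. Since $\mathcal{M}$ is complete there is a sequence $p_{j}$ with $d_{g}(p_{0},p_{j})\to\infty$ and $|\operatorname{Rm}|(p_{j})\to\infty$, and Perelman's point selection then yields points $q_{j}$ with $Q_{j}:=|\operatorname{Rm}|(q_{j})\to\infty$, still escaping to infinity, such that $|\operatorname{Rm}|\le 4Q_{j}$ on $B_{g}(q_{j},A_{j}Q_{j}^{-1/2})$ with $A_{j}\to\infty$. Rescaling $g_{j}:=Q_{j}g$ and using $\kappa$-noncollapsing together with Cheeger--Gromov compactness, I pass to a subsequential pointed limit $(\mathcal{M}_{\infty},g_{\infty},q_{\infty})$: a complete $\kappa$-noncollapsed $4$-manifold with $|\operatorname{Rm}_{g_{\infty}}|\le 4$ and $|\operatorname{Rm}_{g_{\infty}}|(q_{\infty})=1$, so $g_{\infty}$ is not flat. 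Under the rescaling $R_{g_{j}}=Q_{j}^{-1}R_{g}\to 0$ and $|\nabla f|_{g_{j}}^{2}=Q_{j}^{-1}|\nabla f|_{g}^{2}\to 0$ uniformly while the full curvature tensor stays bounded; normalizing $f(q_{j})=0$, the gradient bound forces $f\to 0$ in $C^{0}_{\mathrm{loc}}$, and since $\nabla^{2}_{g_{j}}f=-\operatorname{Ric}_{g_{j}}$ is uniformly bounded, in $C^{1}_{\mathrm{loc}}$, so passing to the limit in the soliton equation $\operatorname{Ric}+\nabla^{2}f=0$ yields $\operatorname{Ric}_{g_{\infty}}=0$ (equivalently, the rescaled eternal soliton flows converge to an eternal Ricci flow with $R\equiv 0$ and $R\ge 0$, which is static and Ricci-flat). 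Thus $(\mathcal{M}_{\infty},g_{\infty})$ is complete, non-flat, Ricci-flat and $\kappa$-noncollapsed on all scales; Bishop--Gromov monotonicity and the noncollapsing lower bound give Euclidean volume growth with asymptotic volume ratio strictly below that of $\RR^{4}$ (equality would force flatness), so by the fundamental result of Cheeger and Naber $(\mathcal{M}_{\infty},g_{\infty})$ is a non-flat Ricci-flat ALE $4$-manifold, asymptotic to $\RR^{4}/\Gamma$ with $\Gamma\neq\{1\}$.

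To conclude I would exploit the topology of $\mathcal{M}_{\infty}$. A non-flat Ricci-flat ALE $4$-manifold has a nonzero second homology class of nonzero self-intersection --- its intersection form is negative definite of positive rank, which one reads off from the structure and classification theory of such manifolds (excluding in particular a non-flat Ricci-flat ALE metric on a rational homology $4$-ball) --- so $\mathcal{M}_{\infty}$ contains a smoothly embedded closed oriented surface $S$ of nonzero normal Euler number. By the $C^{\infty}$ convergence $(\mathcal{M},g_{j},q_{j})\to(\mathcal{M}_{\infty},g_{\infty},q_{\infty})$, for all large $j$ a fixed $g_{\infty}$-ball about $q_{\infty}$ containing $S$ is diffeomorphic to an open set $W_{j}\subset\mathcal{M}$ near $q_{j}$ carrying a copy of $S$; since $q_{j}\to\infty$ while $\operatorname{diam}_{g}W_{j}\to 0$, after passing to a subsequence the $W_{j}$ are pairwise disjoint in $\mathcal{M}$. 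Finally, since $\mathcal{M}$ is itself a pointed $C^{\infty}$ limit of rescalings of the flow on the closed manifold $N$, for every $m$ the compact set $\overline{W_{1}\cup\cdots\cup W_{m}}$ embeds into $N$, producing $m$ pairwise disjoint embedded surfaces with nonzero self-intersection; disjointness together with nonvanishing self-intersection forces their classes to be linearly independent in $H_{2}(N;\mathbb{Q})$, so $b_{2}(N)\ge m$ for every $m$, which is absurd. I expect the topological step to be the main obstacle: one must show that \emph{every} non-flat Ricci-flat ALE $4$-manifold carries topology robust enough --- a negative-definite intersection form of positive rank, or at least an essential space-form cross-section at infinity --- to survive a diffeomorphism into a fixed closed $4$-manifold and to rule out arbitrarily many disjoint copies; carrying this out uniformly over all groups $\Gamma$ that can occur at infinity is where the classification of Ricci-flat ALE $4$-manifolds and elementary $4$-manifold (``topological'') lemmas enter. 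One could instead try to run the argument with the $L^{2}$-curvature gap $\int_{\mathcal{M}_{\infty}}|\operatorname{Rm}|^{2}\ge\varepsilon_{0}>0$ (from Chern--Gauss--Bonnet, using $\chi\ge 1$ and $|\Gamma|\ge 2$) together with scale-invariance of $\int|\operatorname{Rm}|^{2}$ in dimension $4$, but that would require finiteness of the $L^{2}$ curvature of a $4$-dimensional steady GRS singularity model, which is not known, so the homological argument is preferable.
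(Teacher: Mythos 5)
Your analytic part tracks the paper's proof closely: Perelman point selection applied to a curvature blow-up sequence, $\kappa$-noncollapsing, the steady bound $0\le R\le 1$, and Cheeger--Gromov compactness yield a nonflat, Ricci-flat, $\kappa$-noncollapsed limit $(\mathcal{M}_\infty,g_\infty)$, hence (Cheeger--Naber) a nonflat Ricci-flat ALE $4$-manifold whose compact core embeds, with arbitrarily many disjoint copies, into the closed $4$-manifold $\mathcal{N}$ from which the singularity model arose. (The paper runs the canonical steady flow and Shi's local estimates to get the higher-derivative bounds needed for $C^\infty$ compactness, and kills $\operatorname{Ric}_{g_\infty}$ through the evolution $\partial_t R=\Delta R+2|\operatorname{Ric}|^2$; your ``$f\to 0$ in $C^1_{\mathrm{loc}}$'' does not by itself let you pass to the limit in $\operatorname{Ric}+\nabla^2 f=0$, but the parenthetical Ricci-flow reformulation you give is exactly the paper's argument, so this is presentational.)

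The genuine gap is the topological step, and you flag it yourself. You assert that every nonflat Ricci-flat ALE $4$-manifold has $b_2\ge 1$, ``read off from classification theory.'' But that classification is open outside the simply-connected (Kronheimer) case --- the paper records this explicitly as a conjecture in \S\ref{sub ALE def} --- and Chern--Gauss--Bonnet only forces $\chi\ge 1$, which is compatible with $b_2=0$. The paper sidesteps this by mining the hypothetical unbounded family of disjoint embeddings for topology \emph{first}: Mayer--Vietoris (Lemma \ref{lem: Lemma 1}) forces $H_i(\partial;R)\to H_i(\M;R)$ to be onto; the resulting exact sequences give both $H_2(\M;\ZZ)=0$ (Eq.\ \eqref{eq: H2 M is zero}) and the very restrictive conclusion that $H_1(\partial;\ZZ)\cong A\oplus A$ (Proposition \ref{prop: Direct Double}), which pins $\Gamma$ down to $D_n^*$ with $n$ even or the binary icosahedral group; and only for \emph{those} two cases does the paper invoke Zhang's Lemma 6.5 to get $b_2\ge 1$ and reach the contradiction with $H_2(\M;\ZZ)=0$. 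So Theorem \ref{CorExcludeInfinite2} is not a $b_2$ lower bound on all Ricci-flat ALE $4$-manifolds --- it is an ``unbounded disjoint embeddings forces direct-double boundary'' statement that substitutes for the classification input you assume. As written, your proposal leans on an unproven claim at exactly the point you yourself identify as the main obstacle.
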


For similar reasons, we have:

\begin{theorem}
Any $4$-dimensional shrinking GRS
singularity model must have curvature which grows at most quadratically. That is, for any $o\in \mathcal{M}$ there exists a constant $C$ depending only on the GRS and $o$ such that $|\operatorname{Rm}|(x)\leq C(d(x,o)+1)^2$ for all $x\in\M$.
\end{theorem}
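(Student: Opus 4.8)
The plan is to argue by contradiction via Perelman's point selection, to recognize the resulting blow-up limits as non-flat Ricci-flat ALE $4$-manifolds by combining the shrinking soliton identities with the Cheeger--Naber result recalled in the introduction, and then to extract a contradiction from a topological lemma on such bubbles. Normalize the shrinker so that $\operatorname{Ric}+\nabla^2 f=\tfrac12 g$ and $R+|\nabla f|^2=f$, and recall that $R\ge 0$, $f\ge 0$, and, by Cao--Zhou, $f(x)\le\tfrac14(d(x,o)+c)^2$, so that $|\nabla f|(x)\le\tfrac12 d(x,o)+c$. Suppose the theorem fails, i.e. $\sup_{\M}|\operatorname{Rm}|(x)(1+d(x,o))^{-2}=\infty$; since $|\operatorname{Rm}|$ is bounded on compact sets, this supremum is already infinite on $\M\setminus B_g(o,D)$ for every $D$. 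A standard iteration (Perelman's point selection) then produces, for any prescribed $D$ and any prescribed $M\to\infty$, a point $p$ with $d(p,o)\ge D$, $Q\doteqdot|\operatorname{Rm}|(p)\ge M(1+d(p,o))^2$, and $|\operatorname{Rm}|\le 4Q$ on $B_g(p,aQ^{-1/2})$, where $a\to\infty$ is a parameter we may take to grow slowly, in particular with $a^2=o(Q)$.

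The first step is the blow-up. After rescaling by $Q$ the ball $B_{Qg}(p,a)$ satisfies $|\operatorname{Rm}|\le 4$, and $Qg$ is $\kappa$-noncollapsed on all scales because the singularity model is (Perelman's no local collapsing theorem). The essential point is that the potential degenerates in the limit: on $B_g(p,aQ^{-1/2})$ one has $d(\cdot,o)\le d(p,o)+aQ^{-1/2}$, hence $|\nabla f|^2_{Qg}=Q^{-1}|\nabla f|^2_g\le Q^{-1}\bigl(\tfrac12 d(p,o)+aQ^{-1/2}+c\bigr)^2\to 0$, since $Q^{-1}d(p,o)^2\le 1/M\to 0$ and $a^2=o(Q)$. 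Running this along a sequence of such points with $a\to\infty$, the rescaled pointed shrinkers subconverge in the $C^\infty$ Cheeger--Gromov sense to a complete, connected, $\kappa$-noncollapsed, non-flat limit $(X^4,g_X,x_X)$ with $|\operatorname{Rm}_{g_X}|\le 4$; passing $\operatorname{Ric}_{Qg}+\nabla^2_{Qg}f=\tfrac1{2Q}(Qg)$ to the limit, where the right-hand side tends to $0$ and $\nabla f_\infty\equiv 0$, shows $X$ is Ricci-flat. Being a limit of limits, $X$ is again a $4$-dimensional singularity model, so by the chain of implications recalled in the introduction (no local collapsing $\Rightarrow$ Euclidean volume growth $\Rightarrow$ Cheeger--Naber) it is a Ricci-flat ALE manifold, and it is non-flat since $|\operatorname{Rm}_{g_X}|(x_X)=1$. (The steady case of the preceding theorem is handled the same way, with the Cao--Zhou estimate replaced by the bound $|\nabla f|^2=\mathrm{const}-R\le\mathrm{const}$.) The topological input I would then invoke is: a non-flat Ricci-flat ALE $4$-manifold contains an embedded closed oriented surface of negative self-intersection --- for instance, via Kronheimer's classification of ALE hyperk\"ahler $4$-manifolds, a $(-2)$-sphere. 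Fixing such a $\Sigma\subset X$, it lies with a tubular neighborhood in $B_{g_X}(x_X,A^\ast)$ for some finite $A^\ast$, so by the $C^\infty$ convergence above, for large $m$ (so $a_m\ge A^\ast+1$) the rescaled ball $B_{Q_m g}(p^{(m)},A^\ast+1)\subset\M$ is diffeomorphic to the corresponding ball in $X$ and contains a copy of $\Sigma$ with the same (negative) normal Euler number. Hence $\M$ contains a region $K$, of $g$-diameter $O(Q_m^{-1/2})$, carrying an embedded closed oriented surface of negative self-intersection; and by choosing $p^{(m)}$ far out we may take $d(K,o)$ as large, and $\operatorname{diam}_g(K)$ as small, as we wish.

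Next I would assemble the contradiction. Fix $N\in\ZZ_{>0}$. Iterating the previous step gives pairwise disjoint regions $K_1,\dots,K_N\subset\M$, at widely separated distances from $o$, each carrying an embedded closed oriented surface $\Sigma_j$ with $\Sigma_j\cdot\Sigma_j<0$, all contained in a single compact ball $B_g(o,R_N)$. Since $(\M,g)$ is a $C^\infty$ Cheeger--Gromov limit of pointed rescalings of the closed oriented $4$-manifold $\M_0$ from which it arises, the compact ball $B_g(o,R_N)$ embeds diffeomorphically onto an open subset of $\M_0$ for large index in that defining sequence. Hence $\M_0$ contains the disjoint surfaces $\Sigma_1,\dots,\Sigma_N$, for which $\Sigma_j\cdot\Sigma_k=0$ for $j\ne k$ and $\Sigma_j\cdot\Sigma_j<0$; pairing $\sum_j c_j[\Sigma_j]$ with each $[\Sigma_k]$ forces all $c_k=0$, so $[\Sigma_1],\dots,[\Sigma_N]$ are linearly independent in $H_2(\M_0;\mathbb{Q})$ and $b_2(\M_0)\ge N$. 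As $N$ is arbitrary, this contradicts the compactness of $\M_0$, and the theorem follows.

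I expect the main obstacle to be the topological lemma: extracting from an \emph{arbitrary} non-flat Ricci-flat ALE $4$-manifold a feature (here a negative self-intersection surface, but one may need its own structural input, e.g. ruling out the degenerate case $b_2=0$ via the positive mass theorem, to get this) that survives the Cheeger--Gromov approximation and cannot be reproduced arbitrarily often inside a fixed closed $4$-manifold --- together with the multi-scale bookkeeping that turns many ``bubbles at different scales'' into genuinely disjoint submanifolds of one compact region. By comparison, identifying each blow-up limit as Ricci-flat rather than as a nontrivial steady soliton, and then as ALE, should rest only on the soliton identities, Bishop--Gromov, no local collapsing, and the cited Cheeger--Naber theorem.
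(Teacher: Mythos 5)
Your analytic blow-up step is essentially the paper's: apply Perelman's point selection along a sequence where $|\operatorname{Rm}|(1+d(\cdot,o))^{-2}\to\infty$, use the shrinker identity $|\nabla f|\le f^{1/2}$ with the Cao--Zhou potential bound to show the canonical-form diffeomorphisms barely move points at scale $Q^{-1/2}$, invoke Shi and Hamilton's compactness, and recognize the limit as a non-flat, $\kappa$-noncollapsed, Ricci-flat (hence ALE by Cheeger--Naber) $4$-manifold. That part is correct, and your extra bookkeeping with $a^2=o(Q)$ is a harmless strengthening of the paper's $A_i=\tfrac13 P_i^{1/2}$.

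The gap is exactly where you suspect it, and it is more serious than you suggest. You propose to conclude by producing, inside an \emph{arbitrary} non-flat Ricci-flat ALE $4$-manifold $X$, a closed oriented surface of negative self-intersection (equivalently, $b_2(X)\ge 1$), and you cite Kronheimer's classification. But Kronheimer's theorems classify ALE \emph{hyperk\"ahler} $4$-manifolds; the paper explicitly records that it is only a \emph{conjecture} (still open, with partial progress by Lock--Viaclovsky) that every simply-connected Ricci-flat ALE $4$-manifold is hyperk\"ahler. So the structural input ``$X$ contains a $(-2)$-sphere'' is not available in the needed generality, and without it the intersection-form argument that gives $b_2(\M_0)\ge N$ collapses. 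Appealing to the positive mass theorem only rules out the degenerate case $\Gamma=\{e\}$ (where Ricci-flat ALE is flat), not the case of nontrivial $\Gamma$ with $b_2=0$.

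The paper's Theorem~\ref{CorExcludeInfinite2} gets around this by \emph{using} the hypothesis of unboundedly many disjoint copies as topological leverage, rather than assuming $b_2\ge 1$ a priori. Concretely: if arbitrarily many disjoint copies of the compactification $\M$ (with $\partial\M=S^3/\Gamma$) embed in a closed $4$-manifold, a Mayer--Vietoris argument (Lemma~\ref{lem: Lemma 1}) forces $H_i(\partial)\to H_i(\M)$ to be onto, whence $H_2(\M;\ZZ)=0$, and Lefschetz duality plus universal coefficients force $H_1(S^3/\Gamma;\ZZ)\cong A\oplus A$ (Proposition~\ref{prop: Direct Double}). Zhang's Lemma~6.3 then pins $\Gamma$ down to the binary dihedral $D_n^*$ ($n$ even) or binary icosahedral cases, and his Lemma~6.5 shows that in those cases any $4$-manifold bounding $S^3/\Gamma$ as $\M$ does must have $b_2\ge 1$ --- contradicting $H_2(\M;\ZZ)=0$ derived in the same breath. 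So the $b_2\ge 1$ you want is obtained \emph{after} constraining $\Gamma$, for precisely those groups, and not for all Ricci-flat ALE $4$-manifolds. Your approach would be correct conditional on the hyperk\"ahler conjecture (or on a direct proof that non-flat Ricci-flat ALE $4$-manifolds have $b_2\ge 1$), but as stated it has a genuine unproved step where the paper instead closes the loop via Proposition~\ref{prop: Direct Double} and Zhang's lemmas.
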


The paper is organized as follows. In Section \ref{Section 2} we
first
recall Perelman's point selection method on Riemannian manifolds, which is generally used to obtain limits. We then discuss under what conditions the local derivative of curvature estimates of Shi yield instantaneous estimates for GRS. Next, we prove the main Theorems 1 and 2 modulo the topological lemmas proved in Section 3.

For a survey of $4$-dimensional Ricci flow, see \cite{CFGH} by Gompf, Hillman, and two of the authors.

\section{Curvature estimates for GRS singularity models}\label{Section 2}
\subsection{Point selection}
The following is Perelman's point selection method; see \cite{P1}. Since the method is crucial to our results, we include its proof for the sake of completeness.

\begin{lemma}
Let $(\mathcal{M}^{n},g)$ be a complete Riemannian manifold. For any $%
y_{0}\in \mathcal{M}$ \emph{(}let $P_{0}\doteqdot \left\vert \func{Rm}%
\right\vert (y_{0})$\emph{)} and $A_{0}\in \mathbb{R}^{+}$ there exists $%
x_{0}\in B_{2A_{0}P_{0}^{-1/2}}(y_{0})$ such that $Q_{0}\doteqdot \left\vert
\func{Rm}\right\vert (x_{0})\geq P_{0}$ and%
\begin{equation}
\left\vert \func{Rm}\right\vert \leq 4Q_{0}\quad \text{in }%
B_{A_{0}Q_{0}^{-1/2}}(x_{0}).
\label{Curvature control at arbitrary distance}
\end{equation}%
For example, we may choose $A_{0}=\frac{1}{3}P_{0}^{1/2}$, in which case we
also have $A_{0}Q_{0}^{-1/2}\leq \frac{1}{3}$, so that $x_{0}\in
B_{2/3}(y_{0})$ and $B_{A_{0}Q_{0}^{-1/2}}(x_{0})\subset B_{1}(y_{0})$.
\end{lemma}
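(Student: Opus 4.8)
The plan is to carry out Perelman's iteration: construct a sequence of points $y_0, y_1, y_2, \dots$ along which $|\operatorname{Rm}|$ grows at least geometrically, and argue that completeness of $(\mathcal{M},g)$ forces the sequence to terminate after finitely many steps, so that the last point serves as the desired $x_0$.

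In detail, I would set $Q_k \doteqdot |\operatorname{Rm}|(y_k)$ and proceed inductively from the given $y_0$ (note that $P_0>0$ is implicit here, e.g.\ it is forced by $A_0\in\mathbb{R}^+$ in the sample choice $A_0=\tfrac{1}{3}P_0^{1/2}$). Given $y_k$, there are two cases. If $|\operatorname{Rm}|\le 4Q_k$ throughout $B_{A_0 Q_k^{-1/2}}(y_k)$, the iteration stops and I claim $x_0\doteqdot y_k$ works. Otherwise there is a point $y_{k+1}\in B_{A_0 Q_k^{-1/2}}(y_k)$ with $|\operatorname{Rm}|(y_{k+1})>4Q_k$, and I pass to $y_{k+1}$. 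Inductively this yields $Q_k\ge 4^kP_0$ and $d(y_k,y_{k+1})\le A_0 Q_k^{-1/2}\le A_0 P_0^{-1/2}2^{-k}$; summing the geometric series $\sum_{k\ge 0}2^{-k}=2$ and applying the triangle inequality shows every $y_k$ lies (strictly) inside $B_{2A_0 P_0^{-1/2}}(y_0)$. Hence at the step where the iteration halts we have $x_0=y_k\in B_{2A_0 P_0^{-1/2}}(y_0)$, $Q_0=Q_k\ge P_0$, and the curvature bound \eqref{Curvature control at arbitrary distance} holds by the halting condition.

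The one genuine point is that the iteration must halt, and this is where completeness enters: by the Hopf--Rinow theorem the closed metric ball $\overline{B}_{2A_0 P_0^{-1/2}}(y_0)$ is compact, so $|\operatorname{Rm}|$ is bounded on it; but all the $y_k$ lie in this ball while $Q_k\ge 4^kP_0\to\infty$, a contradiction if the sequence were infinite. This compactness-versus-unbounded-curvature dichotomy is the crux of the argument; everything else is bookkeeping with the triangle inequality and the geometric series.

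For the concluding example with $A_0=\tfrac{1}{3}P_0^{1/2}$: then $A_0 P_0^{-1/2}=\tfrac{1}{3}$, so $x_0\in B_{2/3}(y_0)$, and $A_0 Q_0^{-1/2}=\tfrac{1}{3}(P_0/Q_0)^{1/2}\le\tfrac{1}{3}$ since $Q_0\ge P_0$; hence any $z\in B_{A_0 Q_0^{-1/2}}(x_0)$ satisfies $d(z,y_0)\le d(z,x_0)+d(x_0,y_0)<\tfrac{1}{3}+\tfrac{2}{3}=1$, giving $B_{A_0 Q_0^{-1/2}}(x_0)\subset B_1(y_0)$.
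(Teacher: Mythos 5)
Your proof is correct and follows essentially the same route as the paper's: Perelman's iterative point selection, geometric growth $Q_k\ge 4^kP_0$, the geometric-series bound keeping all $y_k$ inside $B_{2A_0P_0^{-1/2}}(y_0)$, and termination because curvature cannot be unbounded on that bounded set. You make the termination step slightly more explicit by invoking Hopf--Rinow compactness, which the paper leaves implicit, but the argument is the same.
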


\begin{proof}
If (\ref{Curvature control at arbitrary distance}) holds for $x_{0}=y_{0}$,
then we are done. So suppose (\ref{Curvature control at arbitrary distance})
does not hold for $x_{0}=y_{0}$. Let $O_{0}\doteqdot \left\vert \func{Rm}%
\right\vert \left( y_{0}\right) =P_{0}$. Since (\ref{Curvature control at
arbitrary distance}) does not hold for $x_{0}=y_{0}$, there exists $y_{1}\in
B_{A_{0}O_{0}^{-1/2}}(y_{0})$ such that%
\begin{equation*}
O_{1}\doteqdot \left\vert \func{Rm}\right\vert \left( y_{1}\right) >4O_{0}.
\end{equation*}%
By induction, suppose that $y_{0},\ldots ,y_{j}$ have been chosen such that (%
\ref{Curvature control at arbitrary distance}) does not hold for $x_{0}=y_{k}
$ for all $0\leq k\leq j-1$ and with $O_{k}\doteqdot \left\vert \func{Rm}%
\right\vert \left( y_{k}\right) $ for $0\leq k\leq j$ we have $y_{k+1}\in
B_{A_{0}O_{k}^{-1/2}}(y_{k})$ satisfies $O_{k+1}>4O_{k}$ for $0\leq k\leq j-1
$. Then, for $0\leq k\leq j$ we have%
\begin{equation}
O_{k}\geq 4^{k}O_{0}=4^{k}P_{0}  \label{Ok 4 kay minus 1 O1}
\end{equation}%
and%
\begin{align}
d\left( y_{k},y_{0}\right) & =\sum_{\ell =0}^{k-1}d\left( y_{\ell
+1},y_{\ell }\right)   \label{Distance from zk to y0 less than 2 thirds} \\
& <\sum_{\ell =0}^{k-1}A_{0}O_{\ell }^{-1/2}  \notag \\
& \leq A_{0}\sum_{\ell =0}^{k-1}O_{0}^{-1/2}2^{-\ell }  \notag \\
& <2A_{0}O_{0}^{-1/2}.  \notag
\end{align}%
Hence $y_{k}\subset B_{2A_{0}O_{k}^{-1/2}}(y_{0})$ and $%
B_{A_{0}O_{k}^{-1/2}}(y_{k})\subset B_{3A_{0}O_{k}^{-1/2}}(y_{0})$ for $%
0\leq k\leq j$.

For the sequence $y_{0},y_{1},\ldots $ there exists a first $j_{0}$ for
which (\ref{Curvature control at arbitrary distance}) holds for $%
x_{0}=y_{j_{0}}$, for otherwise we would have an infinite sequence of points
$\{y_{k}\}_{k=0}^{\infty }$ for which $\left\vert \func{Rm}\right\vert
\left( y_{k}\right) \geq 4^{k}P_{0}$ and $y_{k}\in
B_{2A_{0}O_{0}^{-1/2}}(y_{0})$ for all $k\geq 0$, a contradiction. The lemma
follows from taking $x_{0}=y_{j_{0}}$ since $\left\vert \func{Rm}\right\vert
\left( y_{j_{0}}\right) \geq 4^{j_{0}}P_{0}\geq P_{0}$ and since (\ref%
{Distance from zk to y0 less than 2 thirds}) implies $d\left(
y_{j_{0}},y_{0}\right) <2A_{0}P_{0}^{-1/2}$.
\end{proof}

To summarize, given any $A_{0}>0$ and point $y_{0}$, Perelman's point
selection method finds a nearby point $x_{0}$ such that $\left\vert \func{Rm}%
\right\vert $ in the ball centered at $x_{0}$ of scaled radius $A_{0}$ is
bounded by $4$ times its value at $x_{0}$. This is effective since $A_{0}$
is arbitrary. So, given a sequence $\{y_{i}\}$ with $\left\vert \func{Rm}%
\right\vert (y_{i})\rightarrow \infty $, we may choose $\left\{
A_{i}\right\} $ so that $A_{i}\rightarrow \infty $. In particular, as an
immediate consequence of the lemma, we have:

\begin{proposition}
\label{Perelman point selection for Rm tending to infinity}Let $(\mathcal{M}%
^{n},g)$ be a complete Riemannian manifold. For any sequence $%
\{y_{i}\}_{i=1}^{\infty}$ in $\mathcal{M}$ with $P_{i}\doteqdot\left\vert
\func{Rm}\right\vert (y_{i})\rightarrow\infty$ there exists a sequence $%
\{x_{i}\}_{i=1}^{\infty}$ in $\mathcal{M}$ such that for each $i\geq1$ we
have $x_{i}\in B_{2/3}(y_{i})$, $Q_{i}\doteqdot\left\vert \func{Rm}%
\right\vert (x_{i})\geq P_{i}$, and%
\begin{equation*}
\left\vert \func{Rm}\right\vert \leq4Q_{i}\quad\text{in }%
B_{A_{i}Q_{i}^{-1/2}}(x_{i}),\quad\text{where }A_{i}=\frac{1}{3}%
P_{i}^{1/2}\rightarrow\infty.
\end{equation*}
\end{proposition}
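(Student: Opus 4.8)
The plan is to deduce the proposition from the preceding lemma by a single application for each index, using the explicit parameter choice recorded in the lemma's ``for example'' clause. First I would fix $i\geq 1$ (discarding the finitely many indices, if any, with $P_i=0$, for which there is nothing to prove) and apply the lemma with $y_0=y_i$, so that $P_0=P_i$, and with $A_0\doteqdot A_i=\frac{1}{3}P_i^{1/2}\in\RR^{+}$; this is an admissible choice since $P_i>0$. The lemma then returns a point $x_i\doteqdot x_0\in B_{2A_0P_0^{-1/2}}(y_i)$ with $Q_i\doteqdot\left\vert\operatorname{Rm}\right\vert(x_i)\geq P_i$ and $\left\vert\operatorname{Rm}\right\vert\leq 4Q_i$ on $B_{A_iQ_i^{-1/2}}(x_i)$.

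Next I would simplify the constants. Since $A_0=\frac{1}{3}P_0^{1/2}$, we have $2A_0P_0^{-1/2}=\frac{2}{3}$, so $x_i\in B_{2/3}(y_i)$ --- precisely the containment asserted in the proposition, and exactly the content of the last sentence of the lemma. Moreover $A_i=\frac{1}{3}P_i^{1/2}\to\infty$ because $P_i\to\infty$ by hypothesis. Assembling these conclusions over all $i$ produces the sequence $\{x_i\}_{i=1}^{\infty}$ together with the claimed estimates, which completes the argument.

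I do not expect any genuine obstacle: the proposition is a direct specialization of the lemma. The only point worth flagging is that the scale-invariant radius $A_0Q_0^{-1/2}$ is being used with $A_0$ allowed to tend to infinity, which is legitimate precisely because $A_0$ enters the lemma as a completely free positive parameter and none of the numerical constants $2$, $3$, $4$ in the lemma depend on it. All of the substantive work --- the iterated point picking and the resulting curvature control $\left\vert\operatorname{Rm}\right\vert\leq 4Q_0$ on the scaled ball --- has already been carried out in the proof of the lemma.
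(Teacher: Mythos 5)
Your proposal is correct and takes exactly the approach the paper intends: the authors explicitly state the proposition as ``an immediate consequence of the lemma,'' obtained by applying the lemma to each $y_i$ with the free parameter $A_0 = \frac{1}{3}P_i^{1/2}$, which is precisely what you do. The side remark about discarding indices with $P_i=0$ is a reasonable (and harmless) care that the paper leaves implicit.
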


\subsection{Instantaneous local derivative estimates}
We have the following (instantaneous in time) local derivative estimates. This is useful for rescalings about points where $\left\vert \operatorname{Rm}%
\right\vert $ is bounded below by a positive constant.

\begin{lemma}
\label{Local Deriv Est For Steady GRS}
Let $(\mathcal{M}^{n},g,f)$ be a complete steady or shrinking GRS.
Suppose that $p\in \M$, $r>0$, and $C$
are such that in $B_{2r}(p)$ we have
$\left\vert \limfunc{Rm}\right\vert \leq Cr^{-2}$
and $|\nabla f|\leq r^{-1}$.
Then $\left\vert \nabla ^{m}\limfunc{Rm}\right\vert \leq C_{m}r^{-2-m}$ in $B_{r}(p)$, where $C_{m}$ depends only on $m$, $C$, and $n$.
\end{lemma}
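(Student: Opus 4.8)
The plan is to reduce this to the classical (time-dependent) local derivative estimates of Shi by manufacturing, from the static GRS, a Ricci flow solution on a backward time interval on which the hypotheses of Shi's estimate hold uniformly. Recall that a steady GRS $(\M,g,f)$ generates a self-similar Ricci flow $g(t)=\varphi_t^*g$, where $\varphi_t$ is the flow of $-\nabla f$ (with $\varphi_0=\mathrm{id}$); for a shrinking GRS with $\operatorname{Ric}+\nabla^2 f=\tfrac12 g$ one instead has $g(t)=(1-t)\,\varphi_t^*g$ for an appropriate reparametrized flow of the gradient field, defined for $t$ in a neighborhood of $0$. In either case $g(0)=g$, and the curvature and its covariant derivatives at time $0$ agree with those of the static metric. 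Since the estimate we want is purely about $g=g(0)$, it suffices to run Shi's estimate on a short time interval $[-\tau r^2,0]$ and read off the conclusion at $t=0$.

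First I would fix scale: after rescaling $g\mapsto r^{-2}g$ we may assume $r=1$, so the hypotheses read $|\operatorname{Rm}|\le C$ and $|\nabla f|\le 1$ on $B_2(p)$, and the goal is $|\nabla^m\operatorname{Rm}|\le C_m$ on $B_1(p)$ with $C_m=C_m(m,C,n)$. Next I would control the flow of $-\nabla f$ (resp.\ the rescaled gradient flow) on $B_2(p)$: since $|\nabla f|\le 1$ there, a point of $B_{3/2}(p)$ stays inside $B_2(p)$ under $\varphi_t$ for $|t|\le \tau_0$, where $\tau_0=\tau_0(n)$ is a small universal constant — one needs to check that curvature bounds keep the flow from leaving the ball too fast, using that $\varphi_t$ distorts distances at a rate governed by $|\nabla f|$ and by $|\operatorname{Rm}|$ via the evolution of $g(t)$ (the soliton identity $\operatorname{Ric}=-\nabla^2 f$, resp.\ $\operatorname{Ric}+\nabla^2 f=\tfrac12 g$, together with $|\operatorname{Rm}|\le C$, bounds $|\nabla^2 f|$ and hence the distortion). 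Thus on $B_{3/2}(p)\times[-\tau_0,0]$ the metrics $g(t)$ are uniformly equivalent to $g(0)$ and satisfy $|\operatorname{Rm}(g(t))|\le C'$ with $C'=C'(C,n)$, because $\operatorname{Rm}(g(t))=\varphi_t^*\operatorname{Rm}(g)$ (times a harmless factor $1-t$ in the shrinking case) is just the pulled-back curvature evaluated at points that remain in $B_2(p)$.

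Then I would apply Shi's local derivative estimate (e.g.\ in the form: if $|\operatorname{Rm}|\le C'$ on $B_{\rho}(p)\times[-\tau,0]$ then $|\nabla^m\operatorname{Rm}|(p,0)\le C_m(m,n,C',\rho,\tau)$) on $B_{3/2}(p)\times[-\tau_0,0]$, with $\rho=3/2$ and $\tau=\tau_0$ now depending only on $n$, to conclude $|\nabla^m\operatorname{Rm}|(x,0)\le C_m(m,C,n)$ for every $x\in B_1(p)$; since $g(0)=g$ this is exactly the claimed bound. Undoing the scaling reinstates the factor $r^{-2-m}$. The main obstacle — and the only place where real work beyond citing Shi is needed — is the second step: showing that the self-similar solution genuinely exists and stays nondegenerate on a fixed parabolic neighborhood $B_{3/2}(p)\times[-\tau_0,0]$ with $\tau_0$ depending only on $n$, i.e.\ that the bounds $|\operatorname{Rm}|\le Cr^{-2}$ and $|\nabla f|\le r^{-1}$ on the slightly larger ball $B_{2r}(p)$ are enough to prevent the gradient flow from escaping and to keep $g(t)$ comparable to $g$; this is where the two-sided ball ($B_{2r}$ versus $B_r$) and the soliton structure equation are used. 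Everything else is bookkeeping with the diffeomorphism invariance of $\nabla^m\operatorname{Rm}$.
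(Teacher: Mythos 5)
Your proposal follows the same route as the paper: pass to the canonical-form Ricci flow, use the gradient bound to confine the trajectory $\varphi_t(x)$ to $B_{2r}(p)$ on a short backward time interval, pull back the curvature bound, and apply Shi's local derivative estimate at $t=0$. One clarification on the step you flag as requiring ``real work'': the escape-time control needs only $|\nabla f|\le r^{-1}$ and nothing about curvature, provided distances are measured throughout in the fixed metric $g(0)$ --- the flow ODE gives $\left|\partial_t\varphi_t(x)\right|_{g(0)}=\frac{1}{1-\lambda t}\left|\nabla f\right|_{g(0)}(\varphi_t(x))\le r^{-1}$ for $t\le 0$ (with $\lambda=0$ steady, $\lambda=1$ shrinking), so $\varphi_t(B_{3r/2}(p))\subset B_{2r}(p)$ for $t\in[-r^2/2,0]$; the curvature bound plays no role in confining the flow, enters only as input to Shi, and the time scale $r^2/2$ requires no dimensional constant $\tau_0(n)$.
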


\begin{proof}
Let $(\mathcal{M},g(t),f(t))$, $\lambda t<1$, be the canonical form associated to the steady
GRS; see e.g. Theorem 4.1 in \cite{CLN}. Here $\lambda=0$ in the steady case and $\lambda=1$  in the shrinking case, so that $\lambda \geq 0$.
By hypothesis, $\left\vert \limfunc{Rm}\right\vert (x,0)\leq Cr^{-2}$ for
$x\in B_{2r}^{g(0)}(p)$.
By the definition of the canonical form,
%Since
\[
\frac{\partial }{\partial t}\varphi
_{t}\left( x\right) =\frac{1}{1-\lambda t}\left( \nabla _{g(0)}f(0)\right) \left( \varphi
_{t}\left( x\right) \right) ,
\]
$g(t) = (1-\lambda t) g(0) $, and $f(t)=f(0) \circ \varphi
_{t}$ satisfy
\begin{equation*}
\frac{\partial}{\partial t}g(t)=-2\operatorname{Rc}
= 2 \nabla^2_{g(t)} f(t) - \frac{\lambda}{1-\lambda t} g(t) .
\end{equation*}
That is, $(\M,g(t))$ is a solution to the Ricci flow evolving by scaling and by the pullback by a $1$-parameter family of diffeomorphisms.
Hence,
by the assumption that $|\nabla f(0)|_{g(0)}\leq r^{-1}$ in $B^{g(0)}_{2r}(p)$, we have the inequality
\[
\left\vert
\frac{\partial }{\partial t}\varphi _{t}\left( x\right) \right\vert
_{g(0)}\leq r^{-1}
\]
whenever $\varphi_t(x)\in B^{g(0)}_{2r}(p)$ and $t\leq 0$.  Hence, if $x\in B_{3r/2}^{g(0)}(p)$ and $t\in \lbrack -\frac{%
r^{2}}{2},0]$, then%
\begin{equation*}
d_{g(0)}(\varphi _{t}\left( x\right) ,x)\leq \int_{t}^{0}\left\vert \frac{%
\partial }{\partial t}\varphi _{\bar{t}}\left( x\right) \right\vert
_{g(0)}d\,\bar{t}\leq \frac{r}{2}.
\end{equation*}%
Thus $\varphi _{t}\left( x\right) \in B_{2r}^{g(0)}(p)$, so
that $\left\vert \limfunc{Rm}\right\vert (x,t)=\left\vert \limfunc{Rm}%
\right\vert (\varphi _{t}(x),0)\leq Cr^{-2}$. By Shi's local derivative of
curvature estimates, $\left\vert \nabla ^{m}\limfunc{Rm}\right\vert
(x,0)\leq C_{m}r^{-2-m}$ for $x\in B_{r}^{g(0)}(p)$.
\end{proof}

The condition $|\nabla f|\leq r^{-1}$ puts some restriction on the scales $r$ for which we may apply this lemma. For instance, on a steady GRS it is only known that $|\nabla f|^2=1-R\leq 1$; hence this results works naturally for $0<r\leq 1$ (or less than any fixed constant). On the other hand, on a shrinking GRS, since we only have $|\nabla f|\leq\sqrt{f-R}\leq\frac{1}{2}d(o,x)+C$, this lemma works naturally for $0<r\lesssim d^{-1}(o,x)$. In particular, for the steady case, we have:

\begin{corollary}
Let $(\mathcal{M}^{n},g,f)$ be a complete steady GRS with $\left\vert \limfunc{Rm}\right\vert \leq Cr^{-2}$ in $B_{2r}(p)$, where $r\in (0,1]$.
Then $\left\vert \nabla ^{m}\limfunc{Rm}\right\vert \leq C_{m}r^{-2-m}$ in $B_{r}(p)$, where $C_{m}$ depends only on $m$, $C$, and $n$.
\end{corollary}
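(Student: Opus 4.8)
The plan is to deduce the corollary directly from Lemma \ref{Local Deriv Est For Steady GRS}: all we must do is verify that the hypothesis $|\nabla f|\le r^{-1}$ of that lemma is automatically satisfied on a complete steady GRS when $r\in(0,1]$. As recalled in the remark preceding the corollary, on a complete steady GRS the potential function satisfies $|\nabla f|^{2}=1-R$ under the normalization $R+|\nabla f|^{2}\equiv 1$; this identity follows from tracing the soliton equation $\operatorname{Ric}+\nabla^{2}f=0$ and contracting the second Bianchi identity, which together give $\nabla(R+|\nabla f|^{2})=0$. (If instead the constant vanishes, the soliton is Ricci-flat with $f$ constant and $\nabla f\equiv 0$, so the hypothesis is trivial.) Since $R\ge 0$ on a complete steady GRS --- it generates an eternal, hence ancient, Ricci flow --- this yields $|\nabla f|^{2}=1-R\le 1$, i.e.\ $|\nabla f|\le 1$, on all of $\M$.

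Now, because $r\in(0,1]$ we have $r^{-1}\ge 1\ge|\nabla f|$ everywhere, so in particular $|\nabla f|\le r^{-1}$ in $B_{2r}(p)$. Together with the standing hypothesis $|\operatorname{Rm}|\le Cr^{-2}$ in $B_{2r}(p)$, both hypotheses of Lemma \ref{Local Deriv Est For Steady GRS} are met, and the lemma immediately gives $|\nabla^{m}\operatorname{Rm}|\le C_{m}r^{-2-m}$ in $B_{r}(p)$ with $C_{m}$ depending only on $m$, $C$, and $n$, which is precisely the assertion.

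I do not expect any genuine obstacle: the analytic content is entirely in Lemma \ref{Local Deriv Est For Steady GRS} (and, behind it, Shi's local derivative estimates together with the fact that boundedness of $|\nabla f|$ keeps the potential flow inside $B_{2r}(p)$ for a definite amount of time), and the corollary merely records that for steady solitons the scale-dependent control on $|\nabla f|$ comes for free as soon as $r$ is below a fixed constant. The one nontrivial external input is the nonnegativity of scalar curvature on complete steady GRS, which the preceding discussion already treats as known.
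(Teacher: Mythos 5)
Your proof is correct and matches the paper's (implicit) reasoning: the paper states the corollary without a separate proof precisely because, as you spell out, the normalization $R+|\nabla f|^2\equiv 1$ together with $R\ge 0$ gives $|\nabla f|\le 1\le r^{-1}$ for $r\in(0,1]$, so Lemma \ref{Local Deriv Est For Steady GRS} applies directly.
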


\subsection{Four-dimensional Ricci-flat ALE manifolds}\label{sub ALE def}

We say that a complete noncompact oriented Riemannian
$4$-manifold $(\mathcal{M}^4,g)$ is \emph{asymptotically locally Euclidean} (\emph{ALE})
if there exists $\tau > 0$, a compact subset $K$, a finite subgroup $\Gamma$ of $SO(4)$ acting freely on $S^3$, and an orientation-preserving diffeomorphism $\Phi : (\RR^4 - B_C(0))/\Gamma \Ra \M - K$ for some $C$ such that the pullback $\tilde{h}$ to the cover $\RR^4 - B_C(0)$ of $h = \Phi^*g$ satisfies $\pd_I (\tilde{h}_{jk} - \delta_{jk}) = O (|x|^{-\tau -|I|})$ for each multi-index $I=(i_1, i_2, i_3 ,i_4)$.
By Bando, Kasue, and Nakajima \cite[Theorem 1.5]{BKN}, if an ALE manifold is Ricci flat, then there exists $\Phi$ with $\tau = 4$.

It is conjectured that any simply-connected Ricci-flat ALE $4$-manifold must be hyperk\"ahler, the latter of which has been classified by Kronheimer \cite{K1, K2}. For progress in this direction, see Lock and Viaclovsky \cite{LV}.
Appleton \cite{Appleton2} has shown that the Eguchi--Hanson space, which is a hyperk\"ahler ALE, can occur as a singularity model of $4$-dimensional Ricci flow.

\subsection{Steady GRS singularity models}
Our first main result is the following.

\begin{proposition}
\label{steady models have bounded curvature}
If $(\mathcal{M}^{4},g,f)$ is a
steady GRS which is also a singularity model, then $\left\vert \func{Rm}%
\right\vert $ is bounded.
\end{proposition}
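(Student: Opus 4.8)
The plan is to argue by contradiction using Perelman's point selection (Proposition \ref{Perelman point selection for Rm tending to infinity}) together with the instantaneous derivative estimates (Lemma \ref{Local Deriv Est For Steady GRS}) and the Cheeger--Naber dichotomy. Suppose $|\func{Rm}|$ is unbounded on $\M$, so there is a sequence $y_i$ with $P_i \doteqdot |\func{Rm}|(y_i) \to \infty$. Apply the point selection lemma to produce $x_i$ with $Q_i = |\func{Rm}|(x_i) \ge P_i \to \infty$ and $|\func{Rm}| \le 4Q_i$ on $B_{A_i Q_i^{-1/2}}(x_i)$ with $A_i = \tfrac13 P_i^{1/2} \to \infty$. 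Now rescale: set $g_i = Q_i g$, so that in the metric $g_i$ we have $|\func{Rm}_{g_i}| \le 4$ on the $g_i$-ball of radius $A_i \to \infty$ about $x_i$, and $|\func{Rm}_{g_i}|(x_i) = 1$. Since the rescaled potentials $f_i$ satisfy $|\nabla_{g_i} f_i|^2 = Q_i^{-1}(1 - R) \le Q_i^{-1} \to 0$, Lemma \ref{Local Deriv Est For Steady GRS} (applied at scale comparable to $1$ in the $g_i$ metric, which is legitimate since $|\nabla_{g_i} f_i|$ is eventually $\le 1$) gives uniform bounds on all covariant derivatives $|\nabla^m \func{Rm}_{g_i}|$ on balls of fixed $g_i$-radius about $x_i$.

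Next I would invoke Perelman's no local collapsing: a singularity model is $\kappa$-noncollapsed at all scales for some $\kappa > 0$ (this passes to the steady GRS $(\M,g)$ itself, and the noncollapsing is scale-invariant so it survives the rescaling). Hence the pointed sequence $(\M, g_i, x_i)$ has uniform curvature and derivative bounds on larger and larger balls, uniform injectivity radius lower bounds, and is noncollapsed; by the Cheeger--Gromov compactness theorem it subconverges in $C^\infty_{\mathrm{loc}}$ to a complete pointed limit $(\M_\infty, g_\infty, x_\infty)$ with $|\func{Rm}_{g_\infty}| \le 4$ everywhere and $|\func{Rm}_{g_\infty}|(x_\infty) = 1$, so $\M_\infty$ is nonflat. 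Because $|\nabla_{g_i} f_i| \to 0$ locally uniformly, the limit potential is constant, so $g_\infty$ is a steady GRS with constant potential, i.e.\ Ricci-flat; and being a limit of a noncollapsed sequence it is itself $\kappa$-noncollapsed at all scales, hence has Euclidean volume growth. By the fundamental result of Cheeger--Naber \cite{CN} quoted in the introduction, a $4$-dimensional complete Ricci-flat manifold with Euclidean volume growth that is a blow-up limit is ALE; in particular, by Bando--Kasue--Nakajima the curvature of an ALE manifold decays, $|\func{Rm}_{g_\infty}|(x) = O(d(x,x_\infty)^{-4}) \to 0$ as $x \to \infty$.

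The contradiction is then extracted from the topological/geometric structure: this is exactly where the ``topological lemmas of Section 3'' enter. The point is that $\M_\infty$ arises as a limit at the \emph{base points} $x_i$ where curvature was concentrating, and one needs to rule out this nonflat ALE bubble living inside the (a priori possibly infinite-type) steady GRS $\M$ in a way compatible with the original compact manifold from which the singularity model arose; the topological lemmas should show that such a configuration forces a contradiction, e.g.\ by an incompressibility or end-counting argument preventing the ALE end of $\M_\infty$ from being absorbed. I expect this last step --- reconciling the bubbled ALE limit with the ambient $4$-manifold topology --- to be the main obstacle, since the curvature estimates and Cheeger--Naber input are by now standard, whereas the topological obstruction is the genuinely new ingredient and the reason the result is special to dimension $4$ and to oriented singularity models.
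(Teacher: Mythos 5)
Your setup through the Cheeger--Naber step is essentially the paper's argument. Point selection, $\kappa$-noncollapsing, Shi/instantaneous derivative estimates, and Cheeger--Gromov subconvergence to a nonflat limit $(\M_\infty, g_\infty, x_\infty)$ with $|\operatorname{Rm}_{g_\infty}| \leq 4$ and $|\operatorname{Rm}_{g_\infty}|(x_\infty) = 1$ all match. Your route to Ricci-flatness of the limit (pass the rescaled potentials $f_i$ to a constant limit because $|\nabla_{g_i} f_i| \to 0$, then read Ricci-flatness off the limiting soliton equation) is a small variant of the paper's route (which instead uses $0 \le R \le 1$ and $Q_i \to \infty$ to force $R_{g_\infty} \equiv 0$, then invokes $\partial_t R = \Delta R + 2|\operatorname{Rc}|^2$ along the limit flow). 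Both work; yours needs the extra observation that the higher covariant derivatives of $f_i$ are also uniformly controlled (which follows from the soliton equation and the curvature derivative bounds), so that $f_i$ actually converges to something.

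The genuine gap is the final step, which you explicitly leave open and then guess at incorrectly. The contradiction is \emph{not} an incompressibility or end-counting argument about a single ALE bubble sitting inside $\M$. The key observation you are missing is that the blow-up points $x_i$ escape to infinity in $\M$ and the blown-up balls $B_{A_i Q_i^{-1/2}}(x_i)$ have radii $\leq \tfrac13$ in the original metric, so after passing to a subsequence one gets \emph{pairwise disjoint} embedded copies $\psi_i(\Omega)$ of a fixed compact piece $\Omega \subset \M_\infty$ (with $\Omega^\circ \cong \M_\infty$ and $\partial\Omega = S^3/\Gamma$), with arbitrarily many such copies. Because $\M$ is a singularity model, its compact subsets embed in the closed $4$-manifold $\mathcal{N}$ from which it arose, so one obtains arbitrarily many disjoint copies of $\Omega$ embedded in $\mathcal{N}$. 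The contradiction then comes from Theorem \ref{CorExcludeInfinite2}, the purely topological statement (proved via Mayer--Vietoris and Poincar\'e/Lefschetz duality, not via incompressibility) that a closed $4$-manifold can contain at most a bounded number of pairwise disjoint embedded copies of a fixed Ricci-flat ALE $4$-manifold. Without the disjointness and unboundedness of the copies, and without identifying the correct topological obstruction, the proof is not complete.
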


\begin{proof}
Suppose $\left\vert \func{Rm}\right\vert $ is not bounded. Then there exists
$\{y_{i}\}_{i=1}^{\infty}$ in $\mathcal{M}$ with $P_{i}\doteqdot \left\vert
\func{Rm}\right\vert (y_{i})\rightarrow\infty$. By Proposition \ref{Perelman
point selection for Rm tending to infinity}, there exists a sequence $%
\{x_{i}\}_{i=1}^{\infty}$ in $\mathcal{M}$ such that for each $i\geq1$ we
have $x_{i}\in B_{2/3}(y_{i})$, $Q_{i}\doteqdot\left\vert \func{Rm}%
\right\vert (x_{i})\geq P_{i}\geq100$, and%
\begin{equation}
\left\vert \func{Rm}\right\vert \leq4Q_{i}\quad\text{in }%
B_{A_{i}Q_{i}^{-1/2}}(x_{i}),\quad\text{where }A_{i}=\frac{1}{3}%
P_{i}^{1/2}\rightarrow\infty.
\label{Curvature control on larger and larger balls Ai}
\end{equation}
Since $(\mathcal{M}^{4},g,f)$ is a singularity model, it is $\kappa $%
-noncollapsed on all scales for some $\kappa>0$. Hence (\ref{Curvature
control on larger and larger balls Ai}) implies%
\begin{equation*}
\func{Vol}B_{Q_{i}^{-1/2}}(x_{i})\geq\kappa Q_{i}^{-2}\quad\text{for all }%
i\geq1.
\end{equation*}
Let $(\mathcal{M}^{4},g(t),f(t))$, $t\in(-\infty,\infty)$, be the associated
canonical form, where $g(t)=\varphi_{t}^{\ast}g$ and $f(t)=f\circ\varphi_{t}$%
. Since $\frac{\partial}{\partial t}\varphi_{t}\left( x\right) =\left(
\nabla_{g(0)}f(0)\right) \left( \varphi_{t}\left( x\right) \right) $, we
have $\left\vert \frac{\partial}{\partial t}\varphi_{t}\left( x\right)
\right\vert _{g(0)}\leq1$. Hence, if $x\in
B_{(A_{i}-1)Q_{i}^{-1/2}}^{g(0)}(x_{i})$ and $t\in\lbrack-Q_{i}^{-1},0]$,
then%
\begin{equation*}
d_{g(0)}(\varphi_{t}\left( x\right) ,x)\leq\left\vert t\right\vert \leq
Q_{i}^{-1}\leq Q_{i}^{-1/2},
\end{equation*}
so that $\varphi_{t}\left( x\right) \in B_{A_{i}Q_{i}^{-1/2}}^{g(0)}(x_{i})$
and hence $\left\vert \limfunc{Rm}\right\vert (x,t)=\left\vert \limfunc{Rm}%
\right\vert (\varphi_{t}(x),0)\leq4Q_{i}$. Hence, by Shi's local derivative
estimates, we have%
\begin{equation*}
\left\vert \nabla^{m}\limfunc{Rm}\right\vert \leq C_{m}Q_{i}^{1+\frac {m}{2}%
}\quad\text{in }B_{\frac{1}{2}A_{i}Q_{i}^{-1/2}}^{g(0)}(x_{i})\times\lbrack-%
\frac{1}{2}Q_{i}^{-1},0].
\end{equation*}

By the above and by Hamilton's Cheeger--Gromov compactness theorem, there
exists a subsequence such that $(\mathcal{M}^{4},Q_{i}g(Q_{i}^{-1}t),x_{i})$
converges in the $C^{\infty}$ pointed Cheeger--Gromov sense to a complete
solution to the Ricci flow $(\mathcal{M}_{\infty}^{4},g_{\infty}(t),x_{%
\infty })$, $t\in\lbrack-\frac{1}{2},0]$, with $\left\vert \func{Rm}%
_{g_{\infty}}\right\vert \leq4$ and $\left\vert \func{Rm}_{g_{\infty}}\right%
\vert (x_{\infty},0)=1$.
In particular, $g_{\infty}(t)$ is nonflat.
Since $(\mathcal{M}^{4},g,f)$ is a steady GRS, we
have $0\leq R\leq1$. This and $Q_{i}\rightarrow\infty$ imply that $%
R_{g_{\infty}}\equiv0$, which in turn implies that $\func{Rc}%
_{g_{\infty}}\equiv0$ by the Ricci flow equation $\frac{\partial
R_{g_{\infty }}}{\partial t}=\Delta_{g_{\infty}}R_{g_{\infty}}+2\left\vert
\func{Rc}_{g_{\infty}}\right\vert ^{2}$.

By Perelman's no local collapsing
theorem and since $R_{g_{\infty}(0)}=0$ and we are on a singularity model (a limit of a limit is a limit), there exists $\kappa >0$ such that $\operatorname{Vol}_{g_{\infty}(0)}B^{g_{\infty}(0)}_r(x_{\infty})\geq \kappa r^4$ for all $r>0$
(see e.g. Theorems 28.6 and 28.9 in \cite{Cetc});\footnote{This follows from Perelman's no local collapsing using the scalar curvature and the fact that the rescaling factors tend to infinity.} hence
by definition $g_{\infty}(0)$ has Euclidean volume growth. By Cheeger and Naber's
theorem (Corollary 8.86 of \cite{CN}), $(\mathcal{M}_{\infty},g_{\infty}(0))$ is asymptotically locally Euclidean (ALE).

Since $(\mathcal{M}_{\infty}^{4},g_{\infty}(0))$ is a nonflat Ricci-flat ALE $4$%
-manifold, there exists a finite subgroup $\Gamma$ of $\func{SO}(4)$ such
that the end of $\mathcal{M}_{\infty}$ is diffeomorphic to $(\mathcal{S}%
^{3}/\Gamma)\times(0,1)$.\footnote{A nonflat Ricci-flat ALE $4$-manifold has only one end since otherwise it will split as the
product of a line and a Ricci flat three-manifold, which in turn implies that it is flat.} Let $\mathcal{S}^{3}/\Gamma \subset\mathcal{M}%
_{\infty}$ be embedded so that the noncompact region in $\mathcal{M}%
_{\infty}$ bounded by $\mathcal{S}^{3}/\Gamma$ is diffeomorphic to $(%
\mathcal{S}^{3}/\Gamma)\times\lbrack0,1)$. Let $\Omega$ denote the compact
region in $\mathcal{M}_{\infty}$ bounded by $\mathcal{S}^{3}/\Gamma$. Then $\Omega^{\circ}$ is diffeomorphic to $\mathcal{M}_{\infty}$. By
the definition of Cheeger--Gromov convergence, there exist an exhaustion $%
U_{i}$ of $\mathcal{M}_{\infty}$ and embeddings $\psi_{i}:U_{i}%
\rightarrow B_{\frac{1}{2}A_{i}Q_{i}^{-1/2}}^{g(0)}(x_{i})$. In particular, $%
\psi_{i}(\Omega)$ are embeddings of $\Omega$ with $\psi _{i}(\Omega)\subset
B_{1}^{g(0)}(x_{i})$ for each $i$ sufficiently large. So we may assume that
the $\psi_{i}(\Omega)$ are disjoint from each other in the steady GRS $\mathcal{M}$.

By hypothesis, there exists a solution to the Ricci flow on a closed $4$%
-manifold $(\mathcal{N}^{4},h(t))$, $t\in\lbrack0,T)$, $T<\infty$, for which the steady GRS
$(\mathcal{M},g,f)$ is a singularity model. Hence there exist an
exhaustion $\{V_{j}\}$ of $\mathcal{M}$ and embeddings $%
\phi_{j}:V_{j}\rightarrow\mathcal{N}$. In particular, for any $I\in%
\mathbb{N}$ there exists $j(I)$ such that $V_{j(I)}$ contains $%
\psi_{i}(\Omega)$ for all $1\leq i\leq I$. Then the $\Omega_{i}\doteqdot(%
\phi_{j(I)}\circ\psi _{i})(\Omega)$ are disjoint embeddings into $\mathcal{N}$ for $1\leq i\leq I$.
Since the $\Omega_{i}^{\circ}$ are each diffeomorphic to $\mathcal{M}_{\infty}$ and are pairwise disjoint and since $I$ is arbitrary, we obtain a contradiction to Theorem \ref{CorExcludeInfinite2} below.
\end{proof}

\subsection{Shrinking GRS singularity models}
By a similar argument we can prove the following.

\begin{proposition}
\label{Quad at most growth}
If $(\mathcal{M}^{4},g,f)$ is a shrinking GRS
which is also a singularity model and $o\in \mathcal{M}$, then there exists
a constant $C$ such that%
\begin{equation*}
\left\vert \func{Rm}\right\vert (x)\leq C(d(x,o)+1)^{2}\quad \text{for all }%
x\in \mathcal{M}.
\end{equation*}
\end{proposition}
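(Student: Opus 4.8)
The plan is to argue by contradiction, in close parallel with the proof of Proposition \ref{steady models have bounded curvature}; the only genuinely new ingredient is the bookkeeping needed to keep $|\nabla f|$ under control on the balls produced by point selection. First, since changing the base point alters $d(\cdot,o)+1$ by a bounded factor, we may assume $o$ is a minimum point of $f$, so that (as recalled after Lemma \ref{Local Deriv Est For Steady GRS}) $|\nabla f|\le\sqrt{f-R}\le\tfrac12 d(\cdot,o)+c$ and, since $f$ grows at most quadratically in $d(\cdot,o)$, also $R\le f\le C_0(d(\cdot,o)+1)^2$ on $\M$. If the asserted estimate fails, then for every $i\ge 1$ there is $y_i\in\M$ with $P_i\doteqdot|\operatorname{Rm}|(y_i)>i(d(y_i,o)+1)^2$; in particular $P_i\to\infty$ and $(d(y_i,o)+1)^2<P_i/i$. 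By Proposition \ref{Perelman point selection for Rm tending to infinity} there is $x_i\in B_{2/3}(y_i)$ with $Q_i\doteqdot|\operatorname{Rm}|(x_i)\ge P_i$, with $|\operatorname{Rm}|\le 4Q_i$ on $B_{A_iQ_i^{-1/2}}(x_i)$, and with $A_i=\tfrac13 P_i^{1/2}\to\infty$; note $A_iQ_i^{-1/2}\le\tfrac13$ and, from $d(x_i,o)\le d(y_i,o)+\tfrac23$, that $d(x_i,o)\lesssim (Q_i/i)^{1/2}$.

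Now comes the one step that differs from the steady case. For $x\in B_{A_iQ_i^{-1/2}}(x_i)\subset B_{1/3}(x_i)$ we have $d(x,o)\lesssim (Q_i/i)^{1/2}$, so $|\nabla f|(x)\lesssim (Q_i/i)^{1/2}\le Q_i^{1/2}$ once $i$ is large. Passing to the canonical form $(\M^4,g(t),f(t))$, along $\tfrac{\partial}{\partial t}\varphi_t(x)=\tfrac1{1-t}(\nabla_{g(0)}f(0))(\varphi_t(x))$ the $g(0)$-displacement of $\varphi_t(x)$ over $t\in[-Q_i^{-1},0]$ is at most $\lesssim Q_i^{-1}\cdot Q_i^{1/2}=Q_i^{-1/2}$, which is $\ll A_iQ_i^{-1/2}$; a continuity argument then shows, exactly as in the proof of Proposition \ref{steady models have bounded curvature} (this is the mechanism of Lemma \ref{Local Deriv Est For Steady GRS}, now available at scale $\sim Q_i^{-1/2}$ precisely because $d(x_i,o)\ll Q_i^{1/2}$), that $|\operatorname{Rm}|\le 4Q_i$ on $B_{A_iQ_i^{-1/2}}^{g(0)}(x_i)\times[-Q_i^{-1},0]$. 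Shi's local derivative estimates then give $|\nabla^m\operatorname{Rm}|\le C_mQ_i^{1+m/2}$ on $B_{\frac12 A_iQ_i^{-1/2}}^{g(0)}(x_i)\times[-\tfrac12 Q_i^{-1},0]$ with $C_m$ independent of $i$, and by Hamilton's Cheeger--Gromov compactness theorem a subsequence of $(\M^4,Q_ig(Q_i^{-1}t),x_i)$ converges in $C^\infty$ to a complete Ricci flow $(\M_\infty^4,g_\infty(t),x_\infty)$, $t\in[-\tfrac12,0]$, with $|\operatorname{Rm}_{g_\infty}|\le 4$ and $|\operatorname{Rm}_{g_\infty}|(x_\infty,0)=1$; in particular $g_\infty$ is nonflat.

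The limit is then identified as a Ricci-flat ALE manifold exactly as in the steady case. On the space-time region above, $R_g\le C_0(d(\cdot,o)+1)^2\lesssim Q_i/i$, so the scalar curvature of $Q_ig(Q_i^{-1}t)$ is $\lesssim 1/i\to 0$ locally uniformly; hence $R_{g_\infty}\equiv 0$, and then $\operatorname{Rc}_{g_\infty}\equiv 0$ by $\partial_t R_{g_\infty}=\Delta_{g_\infty}R_{g_\infty}+2|\operatorname{Rc}_{g_\infty}|^2$. Since the shrinking GRS $\M$ is a singularity model it is $\kappa$-noncollapsed on all scales, and this passes to $g_\infty$; being Ricci flat, $g_\infty(0)$ therefore has Euclidean volume growth, so by Cheeger--Naber (Corollary 8.86 of \cite{CN}) $(\M_\infty,g_\infty(0))$ is a nonflat Ricci-flat ALE $4$-manifold.

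Finally, the topological part is verbatim the steady argument: the unique end of $\M_\infty$ is diffeomorphic to $(\mathcal S^3/\Gamma)\times(0,1)$ for some finite $\Gamma<\operatorname{SO}(4)$, the compact region $\Omega\subset\M_\infty$ it bounds has $\Omega^\circ$ diffeomorphic to $\M_\infty$, and composing the Cheeger--Gromov embeddings of $\Omega$ into $\M$ with embeddings of exhausting domains of $\M$ into the closed $4$-manifold $\mathcal N$ from which $\M$ arises produces, for each $I\in\mathbb N$, pairwise disjoint embeddings $\Omega_1,\dots,\Omega_I\hookrightarrow\mathcal N$ with each $\Omega_i^\circ$ diffeomorphic to $\M_\infty$; as $I$ is arbitrary this contradicts Theorem \ref{CorExcludeInfinite2}. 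The main obstacle, and the sole essential deviation from the steady case, is the second paragraph: one must rule out point selection returning $x_i$ with $d(x_i,o)$ of order $Q_i^{1/2}$ or larger, since only the bound $|\nabla f|\lesssim d(\cdot,o)\lesssim Q_i^{1/2}$ makes the instantaneous estimate of Lemma \ref{Local Deriv Est For Steady GRS} applicable at scale $Q_i^{-1/2}$ — and it is exactly this requirement that forces the exponent $2$ in the statement.
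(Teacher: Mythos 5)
Your proof is correct and follows essentially the same route as the paper's: contradiction via Perelman point selection, control of the canonical-form flow $\varphi_t$ at scale $Q_i^{-1/2}$ using $|\nabla f|\lesssim d(\cdot,o)$ and the hypothesis that $Q_i(d(x_i,o)+1)^{-2}\to\infty$, Shi and Cheeger--Gromov to produce a nonflat bounded-curvature limit, scalar curvature vanishing (since $R\leq f\lesssim Q_i/i$ after rescaling) forcing the limit to be Ricci-flat and, by noncollapsing plus Cheeger--Naber, ALE, and then the same topological contradiction via Theorem \ref{CorExcludeInfinite2}. The small presentational differences (normalizing $o$ to a minimum of $f$, phrasing the failure as $P_i>i(d(y_i,o)+1)^2$) do not alter the argument.
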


\begin{proof}
Suppose there exists a sequence of points in the shrinking GRS $\{y_{i}\}_{i=1}^{\infty }$ in $%
\mathcal{M}$ with $P_{i}(d(y_{i},o)+1)^{-2}\rightarrow \infty $, where $%
P_{i}\doteqdot \left\vert \func{Rm}\right\vert (y_{i})$. By a previous
proposition, there exists a sequence $\{x_{i}\}_{i=1}^{\infty }$ in $%
\mathcal{M}$ such that for each $i\geq 1$ we have $x_{i}\in B_{2/3}(y_{i})$,
$Q_{i}\doteqdot \left\vert \func{Rm}\right\vert (x_{i})\geq P_{i}$, and%
\begin{equation*}
\left\vert \func{Rm}\right\vert \leq 4Q_{i}\quad \text{in }%
B_{A_{i}Q_{i}^{-1/2}}(x_{i}),\quad \text{where }A_{i}=\frac{1}{3}%
P_{i}^{1/2}\rightarrow \infty .
\end{equation*}%
Let $(\mathcal{M}^{4},g(t),f(t))$, $t\in (-\infty ,1)$, be the associated
canonical form, where $g(t)=\left( 1-t\right) \varphi _{t}^{\ast }g$ and $%
f(t)=f\circ \varphi _{t}$. Since $\frac{\partial }{\partial t}\varphi
_{t}\left( x\right) =\frac{1}{1-t}\left( \nabla _{g(0)}f(0)\right) \left(
\varphi _{t}\left( x\right) \right) $, we have%
\begin{align*}
\left\vert \frac{\partial }{\partial t}\varphi _{t}\left( x\right)
\right\vert _{g(0)}& \leq \frac{1}{1-t}\left\vert \nabla
_{g(0)}f(0)\right\vert \left( \varphi _{t}\left( x\right) \right)  \\
& \leq \frac{1}{1-t}f^{1/2}\left( \varphi _{t}\left( x\right) ,0\right)  \\
& \leq \frac{1}{1-t}\left( d_{g(0)}(\varphi _{t}\left( x\right) ,o)+C\right)
\\
& \leq d_{g(0)}(\varphi _{t}\left( x\right) ,o)+C
\end{align*}%
for $t\leq 0$ since $|\nabla (f^{1/2}(0))|_{g(0)}\leq 1$ by $R\geq 0$. Hence, if $x\in B_{\frac{3}{4}A_{i}Q_{i}^{-1/2}}^{g(0)}(x_{i})
$ and $t\in \lbrack -Q_{i}^{-1},0]$, then%
\begin{align}
d_{g(0)}(\varphi _{t}\left( x\right) ,x)& \leq \int_{t}^{0}\left\vert \frac{%
\partial }{\partial t}\varphi _{t}\left( x\right) \right\vert _{g(0)}dt
\label{Use this also in the quad case} \\
& \leq \int_{t}^{0}\left( d_{g(0)}(\varphi _{t}\left( x\right) ,o)+C\right)
dt  \notag \\
& \leq CQ_{i}^{-1}\left( d_{g(0)}(x_{i},o)+C\right)   \notag \\
& \ll Q_{i}^{-1/2}  \notag
\end{align}%
since $Q_{i}(d_{g(0)}(x_{i},o)+1)^{-2} \gtrsim P_{i}(d_{g(0)}(y_{i},o)+1)^{-2} \rightarrow \infty $. Thus $\varphi
_{t}\left( x\right) \in B_{A_{i}Q_{i}^{-1/2}}^{g(0)}(x_{i})$ and hence $%
\left\vert \limfunc{Rm}\right\vert (x,t)=\left\vert \limfunc{Rm}\right\vert
(\varphi _{t}(x),0)\leq 4Q_{i}$. Hence we may apply Shi's local derivative
estimates and Hamilton's Cheeger--Gromov compactness theorem to obtain that
there exists a subsequence such that $(\mathcal{M}%
^{4},Q_{i}g(Q_{i}^{-1}t),x_{i})$ converges in the $C^{\infty }$ pointed
Cheeger--Gromov sense to a complete solution to the Ricci flow $(\mathcal{M}%
_{\infty }^{4},g_{\infty }(t),x_{\infty })$, $t\in \lbrack -\frac{1}{2},0]$,
with $\left\vert \func{Rm}_{g_{\infty }}\right\vert \leq 4$ and $\left\vert
\func{Rm}_{g_{\infty }}\right\vert (x_{\infty },0)=1$. Since $(\mathcal{M}%
^{4},g,f)$ is a shrinking GRS, we have $0<R\leq f$. This and $%
Q_{i}f^{-1}(x_{i})\rightarrow \infty $ imply that $R_{g_{\infty }}\equiv 0$,
which in turn implies that $\func{Rc}_{g_{\infty }}\equiv 0$ and so $%
g_{\infty }(0)$ is ALE. The remainder of the proof will be the same as in the
steady GRS case.
\end{proof}

\subsection{Steady GRS models with curvature not decaying}

\begin{proposition}\label{PROP 10}
Let $(\mathcal{M}^{4},g,f)$ be a steady GRS which is also a singularity
model. If there exist a constant $c>0$ and a
sequence $y_{i}\rightarrow \infty $ such that%
\begin{equation*}
\left\vert \func{Rm}\right\vert (y_{i})\geq c\quad \text{%
for all }i,
\end{equation*}%
then
$(\M , g, y_i)$ subconverges
to a non-Ricci-flat steady GRS with bounded curvature.
\end{proposition}
\begin{proof}
Observe that we may assume that $(\mathcal{M}^{4},g)$ is not Ricci flat, since otherwise there does not exist a sequence $\{ y_i\}$ as in the hypothesis by Cheeger and Naber's aforementioned theorem.
Since the singularity model has bounded curvature by Proposition \ref{steady models have bounded curvature}, it follows from Shi's local derivative estimates (or Lemma \ref{Local Deriv Est For Steady GRS}) that the covariant derivative of curvature of each order is uniformly bounded. Consider the sequence $\{(\mathcal{M},g,f_i,y_i)\}_{i=1}^\infty$, where $f_i(x)=f(x)-f(y_i)$. We have $|\nabla f_i|=|\nabla f|=\sqrt{1-R}$ and $|\nabla^2 f|=|\func{Rc}|$ are both uniformly bounded. By applying Shi's Bernstein-type estimates and by the covariant derivatives of curvature bounds, it is not hard to show that the covariant derivatives of $f_i$ of each order are bounded independent of $i$. Taking into account the noncollapsing condition, we may extract a subsequence from $\{(\mathcal{M},g,f_i,y_i)\}_{i=1}^\infty$ which converges in the pointed smooth Cheeger--Gromov sense to $(\mathcal{M}_\infty^{4},g_\infty,f_\infty,y_\infty)$. It then follows from the smooth convergence that
\begin{eqnarray*}
\func{Rc}_\infty+\nabla^2 f_\infty&=&0,
\\
|\nabla f_\infty|^2+R_\infty&=&1.
\end{eqnarray*}
The limit is evidently a steady GRS. It is also nonflat since
\begin{eqnarray*}
c\leq|\func{Rm}|(y_i)\rightarrow |\func{Rm}_\infty|(y_\infty)\geq c>0.
\end{eqnarray*}
To see that it is not Ricci flat, we assume for a contradiction that $g_\infty$ is Ricci flat. Then immediately we have
\begin{eqnarray*}
\nabla^2f_\infty&=&0,
\\
|\nabla f_\infty|^2&=&1.
\end{eqnarray*}
Hence $(\mathcal{M}_\infty,g_\infty)$ splits as the product of a line and a three-dimensional Ricci-flat manifold, which is flat; a contradiction.
\end{proof}

\subsection{Shrinking GRS models with quadratic curvature growth} The following result is proved using similar methods.

\begin{proposition}\label{PROP 9}
Let $(\mathcal{M}^{4},g,f)$ be a shrinking GRS which is also a singularity
model and let $o\in \mathcal{M}$. If there exist a constant $c>0$ and a
sequence $y_{i}\rightarrow \infty $ such that%
\begin{equation*}
\left\vert \func{Rm}\right\vert (y_{i})\geq c(d(y_{i},o)+1)^{2}\quad \text{%
for all }i,
\end{equation*}%
then there exists $x_{i}\rightarrow \infty $ with associated rescalings
limiting to a non-Ricci-flat steady GRS with bounded curvature.
\end{proposition}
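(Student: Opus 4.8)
The plan is to run the blow-up analysis of Propositions \ref{steady models have bounded curvature} and \ref{Quad at most growth} in the borderline regime where $|\operatorname{Rm}|$ and the squared distance to $o$ stay comparable along the chosen sequence; then the rescaled shrinking constant goes to $0$ while the rescaled potentials remain uniformly controlled, so the limit will be a nonflat steady GRS, and the topological obstruction will force it to be non-Ricci-flat. Concretely, I would first record the a priori bound $|\operatorname{Rm}|(x)\le C_0(d(x,o)+1)^2$ on $\mathcal{M}$ from Proposition \ref{Quad at most growth}, then apply Proposition \ref{Perelman point selection for Rm tending to infinity} to $\{y_i\}$ to obtain $x_i\in B_{2/3}(y_i)$ with $Q_i\doteqdot|\operatorname{Rm}|(x_i)\ge P_i\doteqdot|\operatorname{Rm}|(y_i)\to\infty$ and $|\operatorname{Rm}|\le 4Q_i$ on $B_{A_iQ_i^{-1/2}}(x_i)$, where $A_i=\tfrac13P_i^{1/2}\to\infty$. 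Since $d(x_i,o)\ge d(y_i,o)-\tfrac23\to\infty$, the $x_i$ escape to infinity, and from
\[
c\,(d(y_i,o)+1)^2\le P_i\le Q_i\le C_0(d(x_i,o)+1)^2
\]
together with $d(x_i,o)\le d(y_i,o)+\tfrac23$ one gets that $Q_i$ and $(d(x_i,o)+1)^2$ are comparable, so after a subsequence $Q_i(d(x_i,o)+1)^{-2}\to a\in(0,\infty)$. Note also $A_iQ_i^{-1/2}=\tfrac13(P_i/Q_i)^{1/2}\le\tfrac13$, so $B_{A_iQ_i^{-1/2}}(x_i)\subset B_{1/3}(x_i)$ in $g$, while this ball has radius $A_i\to\infty$ in the rescaled metric $\tilde g_i\doteqdot Q_ig$.

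Next I would pass to the limit Ricci flow and carry the soliton structure along. Using the shrinking canonical form $(\mathcal{M}^4,g(t),f(t))$, $t<1$, and the bound $|\partial_t\varphi_t(x)|_{g(0)}\le\tfrac1{1-t}|\nabla_{g(0)}f(0)|(\varphi_t(x))\le d_{g(0)}(\varphi_t(x),o)+C$ for $t\le0$ (from $|\nabla f|^2=f-R\le f$ and $f^{1/2}\le d_{g(0)}(\cdot,o)+C$), the same computation as in the proof of Proposition \ref{Quad at most growth} shows that for $x\in B_{\frac34A_iQ_i^{-1/2}}^{g(0)}(x_i)$ and $t\in[-Q_i^{-1},0]$ the displacement $d_{g(0)}(\varphi_t(x),x)$ is at most $CQ_i^{-1}(d_{g(0)}(x_i,o)+C)\lesssim Q_i^{-1/2}\le\tfrac14A_iQ_i^{-1/2}$ for $i$ large (here $d(x_i,o)\lesssim Q_i^{1/2}$), so $|\operatorname{Rm}|(x,t)=|\operatorname{Rm}|(\varphi_t(x),0)\le4Q_i$ on this backward parabolic region. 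Then Shi's local derivative estimates (or Lemma \ref{Local Deriv Est For Steady GRS}) bound $|\nabla^m\operatorname{Rm}|$ after rescaling by $Q_i$, and — $\mathcal{M}$ being $\kappa$-noncollapsed on all scales — Hamilton's Cheeger--Gromov compactness theorem gives a subsequence along which $(\mathcal{M}^4,Q_ig(Q_i^{-1}t),x_i)$ converges in the $C^\infty$ pointed Cheeger--Gromov sense to a complete bounded-curvature Ricci flow $(\mathcal{M}_\infty^4,g_\infty(t),x_\infty)$, $t\in[-\tfrac12,0]$, with $|\operatorname{Rm}_{g_\infty}|\le4$, $|\operatorname{Rm}_{g_\infty}|(x_\infty,0)=1$ (hence $g_\infty$ nonflat), and $\kappa$-noncollapsed on all scales. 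For the potential, set $\tilde f_i\doteqdot f-f(x_i)$, so that with respect to $\tilde g_i=Q_ig$ the shrinker equation becomes $\operatorname{Rc}_{\tilde g_i}+\nabla_{\tilde g_i}^2\tilde f_i=\tfrac1{2Q_i}\tilde g_i$. On $B_{1/3}^g(x_i)\supset B_{A_iQ_i^{-1/2}}(x_i)$ one has $|\nabla f|_g^2=f-R\le f\le\tfrac14(d_g(\cdot,o)+C)^2\lesssim(d_g(x_i,o)+1)^2\lesssim Q_i$, so $|\nabla_{\tilde g_i}\tilde f_i|_{\tilde g_i}\le C$ on the $\tilde g_i$-ball of radius $A_i\to\infty$; since $\tilde f_i(x_i)=0$ and $\nabla_{\tilde g_i}^2\tilde f_i=\tfrac1{2Q_i}\tilde g_i-\operatorname{Rc}_{\tilde g_i}$ has all covariant derivatives controlled by the above curvature bounds, the $\tilde f_i$ are uniformly bounded in $C^\infty_{\mathrm{loc}}$, and a further subsequence converges to $f_\infty$ with $\operatorname{Rc}_{g_\infty}+\nabla_{g_\infty}^2f_\infty=0$. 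Thus $(\mathcal{M}_\infty^4,g_\infty(0),f_\infty)$ is a steady GRS with bounded curvature.

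It then remains to rule out $\operatorname{Rc}_{g_\infty}\equiv0$, and here I would recycle the topological argument of Proposition \ref{steady models have bounded curvature}. If $g_\infty(0)$ were Ricci flat then, being nonflat, $4$-dimensional and $\kappa$-noncollapsed on all scales (hence of Euclidean volume growth), by Cheeger--Naber (Corollary 8.86 of \cite{CN}) it would be a nonflat Ricci-flat ALE $4$-manifold, with a single end diffeomorphic to $(\mathcal{S}^3/\Gamma)\times(0,1)$ for some finite $\Gamma<\operatorname{SO}(4)$. Choosing a compact $\Omega\subset\mathcal{M}_\infty$ with $\Omega^\circ$ diffeomorphic to $\mathcal{M}_\infty$, and noting that the Cheeger--Gromov embeddings $\psi_i$ (taken with respect to $\tilde g_i=Q_ig$) carry $\Omega$ into $g$-balls about $x_i$ of radii tending to $0$, after passing to a subsequence along which the $x_i$ are pairwise far apart in $g$ the images $\psi_i(\Omega)$ are pairwise disjoint in $\mathcal{M}$; composing with the exhaustion of $\mathcal{M}$ by sets that embed into the closed $4$-manifold $\mathcal{N}$ on which $(\mathcal{M},g,f)$ is a singularity model yields, for every $I$, at least $I$ pairwise disjoint embeddings of $\Omega$ into $\mathcal{N}$, hence infinitely many pairwise disjoint copies of $\mathcal{M}_\infty$, contradicting Theorem \ref{CorExcludeInfinite2}; therefore $\operatorname{Rc}_{g_\infty}\not\equiv0$. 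The hard part will be producing the nondegenerate limiting potential $f_\infty$, which hinges on the two-sided comparison $Q_i\simeq(d(x_i,o)+1)^2$ (the lower bound being precisely the quadratic growth hypothesis via point selection, the upper bound being Proposition \ref{Quad at most growth}): this is exactly what keeps the $\tilde f_i$ uniformly $C^1$, hence $C^\infty_{\mathrm{loc}}$, bounded over balls of radius $A_i\to\infty$ while forcing the rescaled shrinking constant $\tfrac1{2Q_i}$ to $0$; were $Q_i/(d(x_i,o)+1)^2\to\infty$ instead, one would be back in the setting of Proposition \ref{Quad at most growth} and recover only a Ricci-flat limit, so the hypothesis is used in an essential way.
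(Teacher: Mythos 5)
Your argument is correct and reaches the stated conclusion, but the key final step — ruling out $\operatorname{Rc}_{g_\infty}\equiv 0$ — is carried out by a genuinely different route than the paper's. You rerun the topological machinery of Proposition~\ref{steady models have bounded curvature}: if $g_\infty(0)$ were Ricci flat then Cheeger--Naber would make $\mathcal{M}_\infty$ a nonflat Ricci-flat ALE $4$-manifold, and the rescaled embeddings would plant unboundedly many disjoint copies of it in the closed manifold $\mathcal{N}$, contradicting Theorem~\ref{CorExcludeInfinite2}. The paper instead never invokes the topological theorem in this proposition; it establishes the normalization
\[
|\nabla_{g_\infty} f_\infty|^2_{g_\infty} + R_{g_\infty} = c_0 > 0
\]
by combining the Cao--Zhou lower bound $f\gtrsim (d(\cdot,o)-C)^2$ with the two-sided comparison $Q_i\simeq (d(x_i,o)+1)^2$, and then derives non-Ricci-flatness by a purely local splitting argument: Ricci flatness would force $\nabla^2 f_\infty\equiv 0$ and $|\nabla f_\infty|^2\equiv c_0>0$, so $\nabla f_\infty$ is a nontrivial parallel field, $\mathcal{M}_\infty$ splits off a line, the $3$-dimensional factor is Ricci flat hence flat, and $g_\infty$ is flat, contradicting $|\operatorname{Rm}_{g_\infty}|(x_\infty,0)=1$. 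Your route costs another pass through Cheeger--Naber and the topological theorem but does not require the Cao--Zhou lower bound on $f$; the paper's route is shorter at this stage and extracts the extra conclusion $c_0>0$, which quantitatively pins down the Hamilton constant of the limiting steady soliton.

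Two small points to tighten. First, the $C^1$ bound on $\tilde f_i$ over balls of rescaled radius $A_i\to\infty$ uses only the one-sided bound $Q_i\gtrsim (d(x_i,o)+1)^2$ coming from the hypothesis together with point selection; the upper bound $Q_i\leq C_0(d(x_i,o)+1)^2$ from Proposition~\ref{Quad at most growth} is automatic and is what in the paper ensures $c_0>0$, not what controls the $\tilde f_i$, so your closing remark about what the two-sided comparison "buys" is slightly misattributed. Second, the hypothetical "$Q_i/(d(x_i,o)+1)^2\to\infty$" in your last sentence cannot actually happen, precisely because of Proposition~\ref{Quad at most growth}; that alternative is moot rather than a genuine fork. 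Neither affects the validity of the proof.
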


\begin{proof}
Let $\left\{ x_{i}\right\} $, $Q_{i}$, and $A_{i}$ be as in Proposition \ref%
{Perelman point selection for Rm tending to infinity}. By (\ref{Use this
also in the quad case}), we have that if $x\in B_{\frac{3}{4}%
A_{i}Q_{i}^{-1/2}}^{g(0)}(x_{i})$ and $t\in \lbrack -Q_{i}^{-1},0]$, then%
\begin{equation*}
d_{g(0)}(\varphi _{t}\left( x\right) ,x)\leq CQ_{i}^{-1}\left(
d_{g(0)}(x_{i},o)+C\right) \leq CQ_{i}^{-1/2}.
\end{equation*}%
Since $A_{i}\rightarrow \infty $, we have $\varphi _{t}\left( x\right) \in
B_{A_{i}Q_{i}^{-1/2}}^{g(0)}(x_{i})$ for $i$ sufficiently large. As in the
proof of Proposition \ref{Quad at most growth}, there exists a subsequence
such that $(\mathcal{M},Q_{i}g(Q_{i}^{-1}t),x_{i})$ converges in the $%
C^{\infty }$ pointed Cheeger--Gromov sense to a complete solution to the
Ricci flow $(\mathcal{M}_{\infty }^{4},g_{\infty }(t),x_{\infty })$, $t\in
\lbrack -\frac{1}{2},0]$, with $\left\vert \func{Rm}_{g_{\infty
}}\right\vert \leq 4$ and $\left\vert \func{Rm}_{g_{\infty }}\right\vert
(x_{\infty },0)=1$. Let $g_{i}=Q_{i}g$ and let $f_{i}(x)=f\left( x\right)
-f\left( x_{i}\right) $. We have%
\begin{equation*}
\func{Rc}_{g_{i}}+\nabla _{g_{i}}^{2}f_{i}=\frac{1}{2}Q_{i}^{-1}g_{i}.
\end{equation*}%
For any $A>0$ and for any $x\in B_{A}^{g_{i}}(x_{i})$, we have%
\begin{eqnarray} \label{Normalization}
\left\vert \nabla f_{i}\right\vert^2 _{g_{i}}+R_{g_i}=\frac{f}{Q_i}\geq\frac{\inf\left\{f(y):y\in B_{AQ_i^{-\frac{1}{2}}}^g(x_i)\right\}}{C(d_g(x_i,o)+1)^2}\geq c>0,
\end{eqnarray}
for all $i$ large enough, where $c>0$ is independent of $A$ and where we have used the lower bound for the potential function of Cao and Zhou \cite{CZ}. Furthermore, we have
\begin{equation*}
\left\vert \nabla f_{i}\right\vert _{g_{i}}(x)=Q_{i}^{-1/2}\left\vert \nabla
f\right\vert (x)\leq Q_{i}^{-1/2}f^{1/2}(x)\leq C,
\end{equation*}%
\begin{equation*}
\left\vert \nabla _{g_{i}}^{2}f_{i}\right\vert _{g_{i}}=\left\vert \func{Rc}%
_{g_{i}}-\frac{1}{2}Q_{i}^{-1}g_{i}\right\vert _{g_{i}}\leq C,
\end{equation*}%
and%
\begin{equation*}
\left\vert \nabla _{g_{i}}^{k}f_{i}\right\vert _{g_{i}}=\left\vert \nabla
_{g_{i}}^{k-2}\func{Rc}_{g_{i}}\right\vert _{g_{i}}\leq C_{k}\quad \text{for
}k\geq 3.
\end{equation*}%
Hence, by taking $i\rightarrow \infty $ we obtain $f_{i}\rightarrow
f_{\infty }$ in $C^{\infty }$ with respect to the Cheeger--Gromov
convergence of $g_{i}$ to $g_{\infty }=g_{\infty }(0)$, where%
\begin{eqnarray*}
\left\vert\nabla_{g_\infty} f_\infty\right\vert^2_{g_\infty}+R_{g_\infty}&=&c_0>0,
\\
\func{Rc}_{g_{\infty }}+\nabla _{g_{\infty }}^{2}f_{\infty }&=&0.
\end{eqnarray*}%
Notice that the left-hand-side of the first equation is constant because the steady GRS equation holds by the second equation; it follows from (\ref{Normalization}) that this constant is positive. According to our point selection procedure, the limit steady GRS $(\mathcal{M}^4_\infty,g_\infty,f_\infty)$ is nonflat and has bounded curvature. To see that it is non-Ricci-flat, we assume the contrary and obtain
\begin{eqnarray*}
\left\vert\nabla_{g_\infty} f_\infty\right\vert^2_{g_\infty}&=&c_0>0,
\\
\nabla _{g_{\infty }}^{2}f_{\infty }&=&0.
\end{eqnarray*}
It follows immediately that $(\mathcal{M}_\infty,g_\infty)$ splits as the product of a line and a three-dimensional Ricci-flat manifold and hence $g_\infty$ must be flat; this is a contradiction.
\end{proof}

One would like to show that steady GRS limits as in the proposition above are not possible, in which case it would follow for a 4-dimensional shrinking GRS singularity model that $|\func{Rm}|=\func{o}(d^2)$ (this would imply finite topological type).

\begin{remark}
For the exclusion of Ricci-flat ALE limits in the proofs of Propositions \ref{PROP 10} and \ref{PROP 9}, we may alternatively use Theorem \ref{CorExcludeInfinite2} below.
\end{remark}

\section{Excluding an unbounded number of copies of a
Ricci-flat ALE
4-manifold}

\subsection{Statement of the result}
In this section we prove the
result which we applied in the previous section to exclude there existing an unbounded number of disjoint copies of a
Ricci-flat ALE
4-manifold in a closed 4-manifold.

\begin{theorem}\label{CorExcludeInfinite2}
If $\mathcal{N}$ is a closed $4$-manifold and if $\mathcal{A}$ is a
Ricci-flat ALE $4$-manifold, then there can exist at most a bounded number
of disjoint copies of $\mathcal{A}$ embedded in $\mathcal{N}$.
\end{theorem}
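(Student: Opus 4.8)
The plan is to pass from each embedded copy of $\mathcal{A}$ to a compact codimension-zero core and to show that the second rational homology groups of these cores inject, orthogonally with respect to intersection forms, into $H_{2}(\mathcal{N};\mathbb{Q})$; since $\mathcal{N}$ is closed, $b_{2}(\mathcal{N})<\infty$, and if each core has $b_{2}\geq 1$ this at once bounds the number of disjoint copies. To set up, recall that since $\mathcal{A}$ is ALE we may fix a compact region $\Omega\subset\mathcal{A}$ bounded by an embedded $\mathcal{S}^{3}/\Gamma$ such that the complementary noncompact region is diffeomorphic to $(\mathcal{S}^{3}/\Gamma)\times[0,1)$ and $\Omega^{\circ}$ is diffeomorphic to $\mathcal{A}$. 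Any embedding of the noncompact $\mathcal{A}$ into the closed $\mathcal{N}$ has open image (invariance of domain), so $m$ disjoint copies of $\mathcal{A}$ in $\mathcal{N}$ restrict to $m$ pairwise disjoint compact submanifolds-with-boundary $\Omega_{1},\dots,\Omega_{m}\subset\mathcal{N}$, each diffeomorphic to $\Omega$; in particular $\partial\Omega_{i}\cong\mathcal{S}^{3}/\Gamma$ and $H_{2}(\Omega_{i};\mathbb{Q})\cong H_{2}(\mathcal{A};\mathbb{Q})$ (as $\Omega^{\circ}\cong\mathcal{A}$). As ALE $4$-manifolds are oriented, each $\Omega_{i}$ is oriented; replacing $\mathcal{N}$ by its orientation double cover if necessary (over which an orientable copy of $\mathcal{A}$ lifts to two disjoint copies), we may assume $\mathcal{N}$ is oriented.

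The first substantive step is to observe that the intersection form $Q_{\Omega_{i}}$ on $H_{2}(\Omega_{i};\mathbb{Q})$ is nondegenerate. Here the point is that $\Gamma$ is finite, so $\partial\Omega_{i}\cong\mathcal{S}^{3}/\Gamma$ is a rational homology $3$-sphere: $H_{1}(\partial\Omega_{i};\mathbb{Q})=0$, and hence also $H_{2}(\partial\Omega_{i};\mathbb{Q})=0$ by Poincar\'{e} duality on the closed oriented $3$-manifold $\partial\Omega_{i}$. The long exact sequence of the pair $(\Omega_{i},\partial\Omega_{i})$ then forces $H_{2}(\Omega_{i};\mathbb{Q})\to H_{2}(\Omega_{i},\partial\Omega_{i};\mathbb{Q})$ to be an isomorphism, and composing with Lefschetz duality and the universal coefficient theorem exhibits the geometric intersection pairing of compact surfaces in $\Omega_{i}^{\circ}$ as a nondegenerate symmetric bilinear form on $H_{2}(\Omega_{i};\mathbb{Q})$.

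Next I would compare with $\mathcal{N}$. Let $\iota_{i}\colon\Omega_{i}\hookrightarrow\mathcal{N}$ be the inclusions. Representing homology classes by compact surfaces in the interiors of the $\Omega_{i}$ and counting intersections in $\mathcal{N}$, one gets $Q_{\mathcal{N}}((\iota_{i})_{*}\alpha,(\iota_{j})_{*}\beta)=0$ for $i\neq j$ (the surfaces lie in the disjoint sets $\Omega_{i},\Omega_{j}$) and $Q_{\mathcal{N}}((\iota_{i})_{*}\alpha,(\iota_{i})_{*}\beta)=Q_{\Omega_{i}}(\alpha,\beta)$ (the same surfaces, the same signed count of intersection points). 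Hence if $\sum_{i=1}^{m}(\iota_{i})_{*}\alpha_{i}=0$ in $H_{2}(\mathcal{N};\mathbb{Q})$, then pairing with $(\iota_{j})_{*}\beta$ for arbitrary $\beta\in H_{2}(\Omega_{j};\mathbb{Q})$ yields $Q_{\Omega_{j}}(\alpha_{j},\beta)=0$ for all $\beta$, so $\alpha_{j}=0$ by the nondegeneracy just established. Thus the natural map $\bigoplus_{i=1}^{m}H_{2}(\Omega_{i};\mathbb{Q})\to H_{2}(\mathcal{N};\mathbb{Q})$ is injective, and therefore $m\cdot b_{2}(\mathcal{A})=\sum_{i}\dim_{\mathbb{Q}}H_{2}(\Omega_{i};\mathbb{Q})\leq\dim_{\mathbb{Q}}H_{2}(\mathcal{N};\mathbb{Q})=b_{2}(\mathcal{N})<\infty$.

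The only nonformal ingredient, and the step I expect to be the crux, is that $b_{2}(\mathcal{A})\geq 1$, i.e.\ that $\mathcal{A}$ is not a rational homology $\RR^{4}$; this is where Ricci-flatness is used beyond the finiteness of $H_{1}$ of the link. The flat case is a genuine exception (if $\mathcal{A}\cong\RR^{4}$ then a closed $4$-manifold does contain infinitely many pairwise disjoint copies), so the statement is to be read for $\mathcal{A}$ nonflat, which is the only case used, since $\mathcal{M}_{\infty}$ is nonflat in Propositions \ref{steady models have bounded curvature} and \ref{Quad at most growth}. Assuming $\mathcal{A}$ nonflat, one first argues that $\pi_{1}(\mathcal{A})$ is finite (otherwise a nontrivial finite cover, being Ricci-flat with at least two ALE ends, contains a line, splits off $\RR$, and is flat), so $b_{1}(\mathcal{A})=0$ and, by Poincar\'{e}--Lefschetz duality together with $\mathcal{A}$ having a single end, $b_{3}(\mathcal{A})=0$; hence $\chi(\mathcal{A})=1+b_{2}(\mathcal{A})$. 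One then invokes that a nonflat Ricci-flat ALE $4$-manifold has $\chi(\mathcal{A})\geq 2$, which follows from the Chern--Gauss--Bonnet formula with the Bando--Kasue--Nakajima \cite{BKN} decay rate together with a curvature-gap (or positive-mass-type) argument, or alternatively from the structure theory of such spaces; this yields $b_{2}(\mathcal{A})\geq 1$ and completes the bound.
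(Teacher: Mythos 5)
Your plan is in outline sound, and the first two-thirds of it gives a clean, essentially self-contained alternative to the paper's Lemma \ref{lem: Lemma 1}. Where the paper runs Mayer--Vietoris with one copy of $\M$ and its complement to show $H_i(\pd)\to H_i(\M)$ is onto (and then derives the integral consequences \eqref{eq: Three short exact sequences 1}--\eqref{eq: Three short exact sequences 3}, \eqref{eq: H2 M is zero}, and the direct-double property of $H_1(\pd;\ZZ)$), you instead observe that since $\pd\cong S^3/\Gamma$ is a $\mathbb{Q}$-homology sphere, the rational intersection form on $H_2(\Omega_i;\mathbb{Q})$ is nondegenerate, and disjointness makes the images in $H_2(\mathcal{N};\mathbb{Q})$ pairwise orthogonal; hence $\bigoplus_i H_2(\Omega_i;\mathbb{Q})\hookrightarrow H_2(\mathcal{N};\mathbb{Q})$ and $m\cdot b_2(\mathcal{A})\le b_2(\mathcal{N})$. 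That is correct and perhaps more transparent than the paper's approach, and your handling of orientability via the double cover of $\mathcal{N}$ is fine. Both approaches reduce the theorem to: a nonflat Ricci-flat ALE $4$-manifold has $b_2\geq 1$.

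The gap is exactly at the point you yourself flag as the crux, and it is more serious than a routine citation. The Chern--Gauss--Bonnet formula for a Ricci-flat ALE $4$-manifold with the BKN decay reads $\chi(\mathcal{A})=\frac{1}{8\pi^2}\int_{\mathcal{A}}|\operatorname{Rm}|^2\,dV+\frac{1}{|\Gamma|}$. If $b_2(\mathcal{A})=0$ this says $\frac{1}{8\pi^2}\int|\operatorname{Rm}|^2=1-\frac{1}{|\Gamma|}$, which is positive for any nontrivial $\Gamma$ and so yields no contradiction; you get $\chi>\frac{1}{|\Gamma|}$, not $\chi\geq 2$. There is no known ``curvature-gap'' sharp enough to rule out $\frac{1}{8\pi^2}\int|\operatorname{Rm}|^2=1-\frac{1}{|\Gamma|}$, and the ``structure theory'' route is the statement that simply-connected Ricci-flat ALE $4$-manifolds are hyperk\"ahler, which the paper explicitly notes is only a conjecture (Subsection \ref{sub ALE def}). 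To close the argument one needs to combine the Euler characteristic with the \emph{signature} defect $\eta(S^3/\Gamma)$, and that combination only produces a contradiction for restricted $\Gamma$. This is precisely why the paper's integral Mayer--Vietoris argument is needed rather than being an inessential variant of yours: from the unbounded-copies hypothesis it extracts not just $b_2(\mathcal{A};\mathbb{Q})=0$ but the integral statement $H_2(\M;\ZZ)=0$ together with the direct-double property $H_1(S^3/\Gamma;\ZZ)\cong A\oplus A$, which forces $\Gamma$ to be binary dihedral $D_n^*$ with $n$ even or binary icosahedral $I^*$ (Lemma 6.3 of \cite{Zhang}); and it is for \emph{those} groups that Lemma 6.5 of \cite{Zhang} (essentially an $\eta$-invariant computation) proves $b_2\geq 1$. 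Your rational-coefficients argument cannot detect the direct-double condition, so it cannot restrict $\Gamma$, and the unrestricted version of ``$b_2\geq 1$'' that you invoke is not a theorem by the means you suggest. If you want to preserve your intersection-form strategy, you would still need to rerun the paper's integral Mayer--Vietoris lemma (or an equivalent) to constrain $\Gamma$ before appealing to the $\eta$-invariant input.

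One further small point: your sketch that $\pi_1(\mathcal{A})$ is finite is not quite the right argument (a finite cover would have multiple ALE ends, not one, and the splitting then uses Cheeger--Gromoll); this is fixable but should be stated carefully, and in the paper $b_1=b_3=0$ comes out directly from \eqref{eq: Three short exact sequences 1}--\eqref{eq: Three short exact sequences 3} without needing to control $\pi_1$ separately.
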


\subsection{Spherical space form ends of $4$-manifolds having an unbounded number of disjoint embeddings}

We first prove the following.\footnote{One of the many references for the standard results from algebraic topology used in this section is Hatcher's book \cite{Hat}.}

\begin{proposition}\label{prop: Direct Double}
Let $\mathcal{M}_0$ be a noncompact $4$-manifold without boundary and with a single end diffeomorphic to $S^3/\Gamma \times [0,\infty)$, where $S^3/\Gamma$ is a spherical space form. If there exists a compact $4$-manifold $\mathcal{N}$ containing an unbounded number of disjoint copies of $\mathcal{M}_0$,
then $H_1(S^3/\Gamma,\mathbb{Z} )$ is a direct double, i.e., isomorphic to $A \oplus A$ for some abelian group $A$.
\end{proposition}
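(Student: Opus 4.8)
The plan is to show that the linking pairing of $\Sigma:=S^3/\Gamma$ is \emph{split}: that it admits two complementary Lagrangian subgroups. This forces the underlying finite abelian group $G:=H_1(\Sigma;\mathbb{Z})$ to be a direct double --- indeed, if $G=K\oplus K'$ with the nonsingular symmetric linking form $\lambda$ vanishing on each summand, then $\lambda$ restricts to a perfect pairing $K\times K'\to\mathbb{Q}/\mathbb{Z}$, so $K'\cong\operatorname{Hom}(K,\mathbb{Q}/\mathbb{Z})\cong K$ and hence $G\cong K\oplus K$. The two Lagrangians are to come from one of the embedded copies and from its complement. For the setup: since $\mathcal{M}_0$ has its unique end equal to the product $\Sigma\times[0,\infty)$, it is diffeomorphic to the interior of a compact oriented $4$-manifold $W$ with $\partial W=\Sigma$; so $\mathcal{N}$ contains $N$ disjoint copies $W_1,\dots,W_N$ of $W$, with $\Sigma_i:=\partial W_i$, and I put $X:=\overline{\mathcal{N}\setminus(W_1^\circ\cup\dots\cup W_N^\circ)}$. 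Note $\Sigma$ is a rational homology $3$-sphere, so $H_1(\Sigma;\mathbb{Q})=0$, $H_2(\Sigma;\mathbb{Z})=0$, and $G$ carries the nonsingular linking form $\lambda$.

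First I would show $W$ must be a rational homology ball. Feeding the decomposition $\mathcal{N}=\big(\bigsqcup_i W_i\big)\cup_{\bigsqcup_i\Sigma_i}X$ into Mayer--Vietoris and using $H_2(\Sigma_i;\mathbb{Z})=0$, the map $\bigoplus_{i=1}^{N}H_2(W_i;\mathbb{Z})\oplus H_2(X;\mathbb{Z})\to H_2(\mathcal{N};\mathbb{Z})$ is injective; since $H_2(\mathcal{N};\mathbb{Z})$ is a fixed finitely generated group and $N$ is arbitrary, this forces $b_2(W)=0$ and then $TH_2(W;\mathbb{Z})=0$, so $H_2(W;\mathbb{Z})=0$. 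Likewise, peeling off the copies one at a time and using $H_1(\Sigma_i;\mathbb{Q})=0$ at each stage yields the telescoping identity $b_1(\mathcal{N})=N\,b_1(W)+b_1(X)$, forcing $b_1(W)=0$. Hence $W$ is a compact oriented rational homology ball with boundary $\Sigma$, and by the Poincar\'e--Lefschetz-duality argument underlying ``half lives, half dies,'' $K:=\ker\!\big(H_1(\Sigma;\mathbb{Z})\to H_1(W;\mathbb{Z})\big)$ is a Lagrangian for $\lambda$: $\lambda|_{K\times K}=0$ and $|K|^2=|G|$.

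The remaining step is to produce a Lagrangian $K'$ transverse to $K$. The hypersurface $\Sigma_1$ separates $\mathcal{N}$ into $W_1$ and $X_1:=\overline{\mathcal{N}\setminus W_1^\circ}$, the latter containing the other $N-1$ copies; its boundary is $\Sigma_1$ with reversed orientation, whose linking form is $-\lambda$, so if $X_1$ were also a rational homology ball with $H_2(X_1;\mathbb{Z})=0$, then $K':=\ker\!\big(H_1(\Sigma_1;\mathbb{Z})\to H_1(X_1;\mathbb{Z})\big)$ would be a second Lagrangian for $\lambda$, and Mayer--Vietoris for $\mathcal{N}=W_1\cup_{\Sigma_1}X_1$ would give $K\cap K'\cong H_2(\mathcal{N};\mathbb{Z})$. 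So what is wanted is a choice of the excised copy $W_1$ --- together, if necessary, with a modification of $\mathcal{N}$ supported in the interior of the complement $X$, away from all the copies --- after which $\mathcal{N}$ has become a $\mathbb{Z}$-homology $4$-sphere split by $\Sigma$ into two rational homology balls; then $K\cap K'=0$, and since $|K|=|K'|=\sqrt{|G|}$ we conclude $G=K\oplus K'$ and finish via the first paragraph.

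I expect this last step to be the real obstacle. The Betti-number counting above is routine, but here one must control the integral (and in particular torsion) homology of both the complement $X_1$ and of $\mathcal{N}$ itself, and it is genuinely at this point that one must exploit having \emph{unboundedly many} disjoint copies rather than just two: merely producing \emph{one} Lagrangian $K$ does not suffice, since a metabolic linking form need not have a direct-double underlying group (for instance $\mathbb{Z}/4$ with $\lambda(1,1)=\tfrac14$), so one really needs the split structure. I anticipate that carrying this out --- either by the suggested surgery reduction or by a more intrinsic argument using the intersection and linking forms of $\mathcal{N}$ --- is where the bulk of the work lies.
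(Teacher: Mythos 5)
Your first paragraph (Mayer--Vietoris with unboundedly many copies forcing $H_2(W;\mathbb{Z})=0$ and $b_1(W)=0$) captures only part of what the paper extracts from that decomposition, and the shortfall is exactly what causes your proposal to stall. The paper's Lemma~\ref{lem: Lemma 1} uses the same Mayer--Vietoris argument to prove something stronger: the map $H_i(\partial;R)\to H_i(\mathcal{M};R)$ is \emph{onto} for every $i$ and every coefficient ring $R$ (otherwise the cokernel contributes to $H_i(\mathcal{N})$ once per disjoint copy, making $H_i(\mathcal{N})$ infinitely generated). In particular $H_1(\Sigma;\mathbb{Z})\to H_1(W;\mathbb{Z})$ is surjective \emph{integrally}, which your rational-Betti-number count does not give. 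That surjectivity, together with $H_2(W;\mathbb{Z})=0$, yields the short exact sequence \eqref{eq: Three short exact sequences 1}; Lefschetz duality and universal coefficients then identify $H_2(W,\Sigma;\mathbb{Z})$ with $H_1(W;\mathbb{Z})$, and running the same sequence mod $p$ (where it splits as vector spaces) gives $H_1(\Sigma;\mathbb{Z}_p)\cong H_1(W;\mathbb{Z}_p)^{\oplus 2}$ for every prime $p$, plus the order identity $|H_1(\Sigma;\mathbb{Z})|=|H_1(W;\mathbb{Z})|^2$. In other words, the paper never needs a second Lagrangian: the unboundedness hypothesis is spent on an integral and mod-$p$ surjectivity statement, not on a geometric split of $\mathcal{N}$.

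Your approach --- exhibiting two complementary Lagrangians $K$, $K'$ for the linking form --- is genuinely different and, as you yourself flag, has a real gap that I do not see how to close along the lines you sketch. You correctly note that a single Lagrangian is insufficient (a metabolic form on a cyclic group like $\mathbb{Z}/4$ is the standard obstruction), so you need $K'$ from the complement $X_1=\overline{\mathcal{N}\setminus W_1^{\circ}}$, which requires $X_1$ to be a rational homology ball with $H_2(X_1;\mathbb{Z})=0$ and $\mathcal{N}$ to be (after some modification) a homology $4$-sphere. Nothing in the hypotheses controls $X_1$ or $\mathcal{N}$ this way: $\mathcal{N}$ is an arbitrary closed $4$-manifold, $X_1$ still contains the other $N-1$ copies, and the proposed ``surgery supported in the interior of the complement'' is not something the Mayer--Vietoris count makes available. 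So the proposal establishes the first Lagrangian but stops there; the missing ingredient is precisely the integral surjectivity that the paper proves in Lemma~\ref{lem: Lemma 1} and which lets one conclude purely algebraically from the pair $(\mathcal{M},\partial)$, without ever touching the complement.
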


Let $\mathcal{M}_0$ be as in the hypotheses of the proposition.
Add $S^3/\Gamma$ to $\mathcal{M}_0$ as its boundary to obtain a compact manifold $\mathcal{M}$ with boundary $\partial \mathcal{M} = S^3/\Gamma \doteqdot \partial $;
so, $\M_0$ is the interior of $\M$.

\begin{lemma}\label{lem: Lemma 1}
Under the hypotheses of Proposition \ref{prop: Direct Double}, the homomorphism $H_m (\pd ; R) \rightarrow H_m (\mathcal{M} ; R)$ induced by $\pd \hookrightarrow \M$ is onto for $m=0,1,2,3$ and any coefficient ring $R$.
The cases $R \cong \ZZ$ and $R \cong \ZZ_p$ will be useful for us. We suppress writing the coefficients, $R$, in the proof.
\end{lemma}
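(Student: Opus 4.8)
The plan is to play the hypothesis of Proposition \ref{prop: Direct Double} --- that for arbitrarily large $N$ the compact manifold $\mathcal{N}$ contains $N$ pairwise disjoint copies of $\mathcal{M}_0$ --- against the fixed finiteness of the homology of $\mathcal{N}$, by a Mayer--Vietoris argument. First I would truncate each copy: since the single end of $\mathcal{M}_0$ is diffeomorphic to $\partial \times [0,\infty)$, cutting off the end gives a compact manifold diffeomorphic to $\mathcal{M}$ with collared boundary a copy of $\partial$, and for each $N$ these truncations may be taken pairwise disjoint and contained in $\operatorname{int}\mathcal{N}$. Denote them $\mathcal{M}^{(1)},\dots,\mathcal{M}^{(N)}$, with boundaries $\partial^{(1)},\dots,\partial^{(N)}\cong\partial$, and set $\mathcal{N}' = \mathcal{N}\setminus\bigsqcup_{j}\operatorname{int}\mathcal{M}^{(j)}$. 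Thickening slightly along the collars, $\mathcal{N} = \mathcal{N}'\cup\bigcup_j\mathcal{M}^{(j)}$ becomes an open cover whose overlap is homotopy equivalent to $\bigsqcup_j\partial^{(j)}$.

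Next I would run Mayer--Vietoris for this decomposition, with the suppressed coefficients $R$:
\[
\bigoplus_{j=1}^{N} H_i\!\left(\partial^{(j)}\right) \xrightarrow{\ \alpha\ } H_i(\mathcal{N}')\oplus\bigoplus_{j=1}^{N} H_i\!\left(\mathcal{M}^{(j)}\right) \xrightarrow{\ \beta\ } H_i(\mathcal{N}),
\]
which is exact and in which the component of $\alpha$ landing in the $j$-th summand $H_i(\mathcal{M}^{(j)})$ is, up to sign, a copy of the inclusion-induced map $\iota_*\colon H_i(\partial)\to H_i(\mathcal{M})$ we wish to show is onto. Let $C = \operatorname{coker}(\iota_*)$. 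Post-composing the middle term with the projection that annihilates the $H_i(\mathcal{N}')$ summand and carries each $H_i(\mathcal{M}^{(j)})$ onto its quotient $C$, one obtains a surjection onto $C^{\oplus N}$ which kills $\operatorname{im}(\alpha) = \ker\beta$; hence it factors through $\operatorname{im}(\beta)$, exhibiting $C^{\oplus N}$ as a quotient of the submodule $\operatorname{im}(\beta)\subseteq H_i(\mathcal{N})$.

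The contradiction then comes from letting $N\to\infty$. When $R\cong\ZZ_p$ is a field this is immediate: $N\cdot\dim_R C = \dim_R C^{\oplus N}\le\dim_R H_i(\mathcal{N};R)$, a fixed finite number, forces $C = 0$. When $R\cong\ZZ$, replace dimension by the minimal number of generators $\mu$: every subgroup of the fixed finitely generated abelian group $H_i(\mathcal{N};\ZZ)$, and every quotient of such a subgroup, is generated by at most a constant $M$ depending only on $H_i(\mathcal{N};\ZZ)$ (its free rank bounds the free rank, its number of invariant factors bounds the torsion part), whereas $C^{\oplus N}$ needs $N\,\mu(C)$ generators; taking $N>M$ forces $\mu(C)=0$, i.e.\ $C=0$. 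Thus $\iota_*$ is onto; since moreover $H_i(\mathcal{M})=0$ for $i\ge 4$ and $H_0(\partial)\to H_0(\mathcal{M})$ is trivially onto ($\mathcal{M}$ being connected), this gives the asserted surjectivity in degrees $i=0,1,2,3$. The same counting applies over any coefficient ring for which submodules of a fixed finitely generated module admit a uniform bound on their minimal number of generators --- in particular any PID or any field --- which covers the cases $R\cong\ZZ$ and $R\cong\ZZ_p$ needed below.

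The only care points, none of them deep, are (i) the collar/thickening bookkeeping that makes Mayer--Vietoris apply cleanly with overlap homotopy equivalent to $\bigsqcup_j\partial^{(j)}$, and (ii) isolating the right algebraic finiteness so that the ``count generators and let $N\to\infty$'' step is uniform in the coefficients rather than redone ad hoc for $\ZZ$ and $\ZZ_p$. All of the geometric force of the lemma sits in the ``unboundedly many disjoint copies'' hypothesis; once that is available the argument is purely homological.
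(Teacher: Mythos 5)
Your proof is correct and takes essentially the same route as the paper's: decompose $\mathcal{N}$ into the disjoint copies of $\mathcal{M}$ and the complement, apply Mayer--Vietoris, observe that the cokernel of $\iota_*$ contributes $N$ independent copies to (a subquotient of) the finitely generated module $H_i(\mathcal{N})$, and let $N \to \infty$. If anything, your phrasing of the algebraic step---exhibiting $C^{\oplus N}$ as a quotient of $\operatorname{im}(\beta) \subseteq H_i(\mathcal{N})$---is a bit more careful than the paper's shorthand that the cokernel ``injects in $H_i(\mathcal{N})$.''
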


\begin{proof}
This is clear for $m=0$.
Applying Mayer--Vietoris to one copy
% $\M_1$
of $\M$ and to the complement of its interior
% $\mathcal{N} - \M_0$,
% $\overline{\mathcal{N} - \M_1}$,
$\overline{\mathcal{N} - \M}$,
which have intersection $\pd$, we obtain the following exact sequence:
% \overline{\mathcal{N}-\M}
%\[ \cdots\rightarrow H_{i}(\pd)\overset{\left(  i_{1\ast},i_{2\ast}\right)  }{\longrightarrow}H_{i}(\M)\oplus H_{i}( \mathcal{N} - \M_0)\overset{j_{1\ast}-j_{2\ast}}{\longrightarrow}H_{i}(\mathcal{N})\overset{\partial_*}{\longrightarrow}H_{i-1}(\pd)\rightarrow\cdots. \]
\[
\cdots \!\rightarrow H_{m}(\pd)\!\overset{\left(  i_{\ast},j_{\ast}\right)  }{\longrightarrow}\! H_{m}(\M)\oplus H_{m}( \overline{\mathcal{N} - \M})\!\overset{k_{\ast}-l_{\ast}}{\longrightarrow}\!H_{m}(\mathcal{N})\!\overset{\partial_*}{\longrightarrow}\!H_{m-1}(\pd)\rightarrow\!\cdots\!,
\]
where $i:\pd \hookrightarrow \M$, $j:\pd \hookrightarrow \overline{\mathcal{N} - \M}$, $k:\M \hookrightarrow \mathcal{N}$, and $l:\overline{\mathcal{N} - \M} \hookrightarrow \mathcal{N}$ are the inclusion maps.
Suppose that $i_{\ast}: H_{m}(\pd ) \rightarrow H_{m}(\M )$ is not onto.
Then there exists a class $\alpha \in H_{m}(\M )$ which is not in the image of $i_{\ast}$.
Thus, for any such $\alpha$ and any $\beta \in H_{m}(\overline{\mathcal{N} - \M})$,
% for example $\beta = 0$,
we have
$k_{\ast} (\alpha) - l_{\ast}(\beta) \neq 0$.

% That is, $\operatorname{coker}(i_*) = H_m(\M)/i_* ( H_m ( \pd ) )$ injects by the map induced by $k_*$ into $H_m(\mathcal{N})/k_* ( i_* ( H_m ( \pd ) ) )= H_m(\mathcal{N})/l_* ( j_* ( H_m ( \pd ) ) ) $.
Thus the cokernel of $i_*$, $\operatorname{coker}(i_*) = H_m(\M)/i_* ( H_m ( \pd ) )$, injects into $H_m(\mathcal{N})/l_*(H_{m}(\overline{\mathcal{N} - \M})$
by the map induced by $k_*$.
Indeed, firstly observe that this map is well defined since $k_*(i_*(H_m(\pd)))=l_*(j_*(H_m(\pd))\subset l_*(H_{m}(\overline{\mathcal{N} - \M})$.
Secondly, suppose that there is a nonzero coset $\alpha\, i_* ( H_m ( \pd ) )$ which maps to zero.
Then we have that $\alpha \in H_{m}(\M )$ is not in the image of $i_{\ast}$
and that
$k_*(\alpha) \in  l_*(H_{m}(\overline{\mathcal{N} - \M}) )$, which is a contradiction.

Now suppose that $\mathcal{N}$ contains a finite sequence of disjoint embedded copies of $\M$, denoted by $\{ \M_p\}_{p=1}^I$, for $I$ arbitrarily large.
Let $\M^I=\bigcup_{p=1}^I \M_p$ and
% $\pd_p=\pd \M_p$.
$\pd^I=\pd\M^I=\cup_{p=1}^I\pd \M_p$.
Let $i^I : \pd^I \hookrightarrow \M^I$
and $l^I:\overline{\mathcal{N} - \M^I}\hookrightarrow\mathcal{N}$
denote the inclusion maps.
Since, in the argument above we may replace $\M$ by $\M^I$ (we did not assume connectedness), we have that
% the cokernel of $i_{I*}$,
$$
\operatorname{coker}(i_{*}^I) = H_m(\M^I)/i_{*}^I ( H_m ( \pd^I ) ) = \oplus_{p=1}^I \operatorname{coker}(i_{p*})
$$
injects into $H_m(\mathcal{N})/l_*^I(H_{m}(\overline{\mathcal{N} - \M^I})$, where $i_p:\pd \M_p\hookrightarrow\M_p$ is the inclusion map.
%We see from the argument above that each
% one of
%$k_{p*}\big(H_m(\M_p)\setminus\operatorname{im}(i_{p*})\big)$ contributes
% at least one
%a
% generator
%class
% of
%in
%$H_m(\mathcal{N})$, where there are no relations between the
% generators.
%classes.
% It then follows that $H_m(\mathcal{N})$ has infinitely many generators, and this is a contradiction since $\mathcal{N}$ is compact.
% That is, there can be no relations between cokernels from disjoint copies of $\M$.
In this way, we obtain an injection from modules of increasing rank into $H_m(\mathcal{N})/l_*^I(H_{m}(\overline{\mathcal{N} - \M^I})$.
This contradicts the finite dimensionality of $H_m(\mathcal{N})$.
%We conclude that $H_m(\mathcal{N})$ is infinitely generated, which is a contradiction since $\mathcal{N}$ is compact.

\end{proof}

\begin{remark}
One may easily picture the workings of the preceding formal argument:  If there is any class  $\alpha$ (of any dimension $m$) in
% the copies
a copy
of $\M$ in $\mathcal{N}$ which does not come from the boundary $\pd$, it is effectively `locked into' that copy of $\M$;
% they
such classes
can neither die in $\mathcal{N}$ nor participate in relations among themselves, because intersecting such null-homologies or homologies (respectively) with $\pd$ would yield a cycle in $\pd$ mapping onto $\alpha$, contrary to assumption.
In other words, the intuition is that the cokernel in each copy of $\M$ is isolated like an island and cannot have any relation with other classes.
\end{remark}

By Lemma \ref{lem: Lemma 1}, the long exact sequence of the pair $(\M,\pd )$ yields the three short exact sequences. Setting $R \cong \ZZ$, we have:
\begin{align}
& 0 \rightarrow H_{2} (\M,\pd ; \ZZ) \rightarrow H_1 (\pd ; \ZZ) \rightarrow H_1 (\M ; \ZZ) \rightarrow 0, \label{eq: Three short exact sequences 1} \\
& 0 \rightarrow H_{3} (\M,\pd ; \ZZ) \rightarrow H_2 (\pd ; \ZZ) \rightarrow H_2 (\M ; \ZZ) \rightarrow 0, \label{eq: Three short exact sequences 2} \\
& 0 \rightarrow H_{4} (\M,\pd ; \ZZ) \rightarrow H_3 (\pd ; \ZZ) \rightarrow H_3 (\M ; \ZZ) \rightarrow 0. \label{eq: Three short exact sequences 3}
\end{align}
Since $\pd$ is a spherical space form, $H_1 (\pd ; \ZZ)$ is torsion, and by \eqref{eq: Three short exact sequences 1} so is $H_1 (\M ; \ZZ)$.
By Poincar\'e duality and the cohomology universal coefficient theorem (see \cite[Theorem 3.2 and Corollary 3.3]{Hat}), we have that
$H_2 (\pd ; \ZZ) \cong H^1 (\pd ; \ZZ) \cong \ZZ^{b_1(\pd)}\oplus \operatorname{Tor} H_0(\pd,\ZZ) \cong 0$, where $b_1(\pd)$ is the first Betti number of $\pd$.
Thus \eqref{eq: Three short exact sequences 2} implies
\begin{equation}\label{eq: H2 M is zero}
H_2 (\M ; \ZZ) \cong 0.
\end{equation}
%Since $\pd$ is a spherical space form, $H_2 (\pd ; \ZZ) \cong H^1 (\pd ; \ZZ) \cong 0$. Thus \eqref{eq: Three short exact sequences 2} implies
%\begin{equation}\label{eq: H2 M is zero}
%H_2 (\M ; \ZZ) \cong 0.
%\end{equation}
%Since $H_1 (\pd ; \ZZ)$ is torsion ($\pi_1 (\pd)$ is finite), so is $H_1 (\M ; \ZZ)$.
We also have that $H_3 (\pd ; \ZZ) \cong \ZZ$, but maps to zero in $H_3 (\M ; \ZZ)$ by \eqref{eq: Three short exact sequences 3} since $H_{4} (\M,\pd ; \ZZ) \rightarrow H_3 (\pd ; \ZZ)$ is an isomorphism. Thus $H_3 (\M ; \ZZ) \cong 0$.

By Lemma \ref{lem: Lemma 1} with $R = \ZZ_p$, that is, by
\eqref{eq: Three short exact sequences 1} with coefficients $\ZZ_p$ instead of $\ZZ$, we have
% Applying \eqref{eq: Three short exact sequences 1} now with coefficients $R = \ZZ_p$, we have
\begin{equation}\label{eq: Three short exact sequences 1 plus}
0 \rightarrow H_2 (\M,\pd ; \ZZ_p) \rightarrow H_1 (\pd ; \ZZ_p) \rightarrow H_1 (\M ; \ZZ_p) \rightarrow 0.
\end{equation}
By Lefschetz duality (see \cite[Theorem 3.43]{Hat}), $H_2 (\M,\pd ; \ZZ_p) \cong H^2 (\M; \ZZ_p)$. By the cohomology universal coefficient theorem,
\begin{equation}\label{eq: the new equation 12}
    0 \rightarrow \operatorname{Ext}_\ZZ^1 (H_1 (\M;\ZZ),\ZZ_p) \rightarrow H^2 (\M; \ZZ_p)  \rightarrow \operatorname{Hom}_\ZZ (H_2 (\M;\ZZ),\ZZ_p) \cong 0 \rightarrow 0.
\end{equation}

As observed below \eqref{eq: Three short exact sequences 3}, $H_1(\M;\mathbb{Z})$ is torsion; thus
$$
H_1 (\M; \ZZ_p) \cong \operatorname{Hom}_{\mathbb{Z}}(\operatorname{Tor}(H_1(\M;\mathbb{Z})),\mathbb{Z}_p).
$$
But this agrees with the usual\footnote{See for example the computations on the Wikipedia page Ext functor or Hatcher’s book \cite{Hat}.
According to these formulae,
$\operatorname{Ext}^1_{\mathbb{Z}}(\mathbb{Z},\mathbb{Z}_p) \cong 0$
and $\operatorname{Ext}^1_{\mathbb{Z}}(\rm{torsion}, \mathbb{Z}_p) \cong \operatorname{Hom} _{\mathbb{Z}}(\rm{torsion}, \mathbb{Z}_p)$.
We use these formulae and the fact that $\operatorname{Ext}$ commutes with direct sums.}
formula which calculates $\operatorname{Ext}^1_{\mathbb{Z}} H_1(\M;\mathbb{Z}),\mathbb{Z}_p) $.
Putting this together with \eqref{eq: the new equation 12}
 yields:
% Since $\pd$ is a rational homology $3$-sphere, the $\operatorname{Ext}$ term is isomorphic to the hom-space below,\footnote{See for example the computations on the Wikipedia page Ext functor or Hatcher’s book \cite{Hat}.} which agrees (since the free summand vanishes in the case of a rational homology sphere) with the first homology with $\ZZ_p$ coefficients:
\begin{equation}
H^2 (\M; \ZZ_p) \cong \operatorname{Ext}_\ZZ^1 (H_1 (\M;\ZZ),\ZZ_p)
% \cong \operatorname{Hom}_{\ZZ} (H_1 (\M;\ZZ),\ZZ_p)
\cong H_1 (\M;\ZZ_p).
\end{equation}
% (The final isomorphism is not functorial, but it still suffices.)

These observations applied to \eqref{eq: Three short exact sequences 1 plus} yield:
\[
H_1(\pd;\ZZ_p) =  H_1(\M;\ZZ_p) \oplus H_1(\M;\ZZ_p).
\]
To bring this information back towards integer coefficients, apply the homology universal coefficient theorem, for $X=\M$ and $\pd$:
\[
0 \rightarrow H_1(X;\ZZ ) \otimes \ZZ_p \rightarrow
H_1(X;\ZZ_p ) \rightarrow \operatorname{Tor}(H_0(X;\ZZ ), \ZZ_p) \cong 0 \rightarrow 0
\]
to get
\begin{equation}\label{eq: Zp direct double}
H_1(\pd ;\ZZ ) \otimes \ZZ_p \cong H_1(\M ;\ZZ ) \otimes \ZZ_p \oplus H_1(\M ;\ZZ ) \otimes \ZZ_p
\quad \text{for all primes } p.
\end{equation}
This still does not quite prove Proposition \ref{prop: Direct Double}. We need to exclude possibilities like $H_1 (\pd ; \ZZ ) \cong \ZZ_4 \oplus \ZZ_2$ and $H_1(\M ;\ZZ ) \cong \ZZ_2$.
This can be done by returning to line \eqref{eq: Three short exact sequences 1}:
$\operatorname{order}H_1 (\pd ; \ZZ ) =
(\operatorname{order}H_2 (\M , \pd ; \ZZ ))
\times
(\operatorname{order}H_1 (\M ; \ZZ ))$, but
$H_2 (\M , \pd ; \ZZ ) \cong H^2 (\M ; \ZZ )
\cong \operatorname{Ext}^1_{\ZZ} (H_1 (\M;\ZZ ),\ZZ )
\cong \operatorname{torsion}H_1 (\M;\ZZ ) \cong H_1 (\M;\ZZ )$, so $\operatorname{order}H_1 (\pd ; \ZZ )
= (\operatorname{order}H_1 (\M ; \ZZ ))^2$.
Proposition \ref{prop: Direct Double} follows from this, \eqref{eq: Zp direct double}, and the fundamental theorem of finite abelian groups.
$\Box$

\subsection{Proof of Theorem \ref{CorExcludeInfinite2}}
We may now prove Theorem \ref{CorExcludeInfinite2}
using Bamler's argument in \S 6 of \cite{Zhang}.
For a contradiction, suppose that there exists
a nonflat Ricci-flat ALE $4$-manifold $\mathcal{A}$ that admits unbounded collections of disjoint embeddings in a compact $4$-manifold $\mathcal{N}$.
By Proposition \ref{prop: Direct Double}, the spherical space form $S^3/\Gamma$, $\Gamma$ nontrivial, of the end of $\mathcal{A}$ satisfies $H_1(S^3/\Gamma,\mathbb{Z} ) \cong A \oplus A$ for some abelian group $A$.
This implies (see e.g. Lemma 6.3 in \cite{Zhang}) that either $\Gamma$ is the binary dihedral group $D_n^*$ with $n$ even or $\Gamma$ is the binary icosahedral group of order $120$.
Now, by Lemma 6.5 in \cite{Zhang}, $b_2(\mathcal{A}) \geq 1$, which  contradicts \eqref{eq: H2 M is zero}. $\Box$

\begin{remark}
We may also rule out the case where $\Gamma$ is the binary icosahedral group $I^*$ of order $120$, that is, where $\pd = S^3/I^*$ is the Poincar\'e homology sphere, as follows. By Rochlin's theorem, if $\pd \M = S^3/I^*$, then $H_*(\M;\ZZ) \not \cong H_*(\operatorname{pt};\ZZ)$. As in Lemma \ref{lem: Lemma 1}, the facts that $H_*(S^3/I^*;\ZZ) \cong H_*(S^3;\ZZ)$
and $H_*(\M;\ZZ) \not \cong H_*(\operatorname{pt};\ZZ)$ imply, via Mayer--Vietoris, that $H_*(\mathcal{N};\ZZ)$ is infinitely generated, a contradiction.
\end{remark}

\textbf{Acknowledgment.} We would like to thank the referee for a number of helpful suggestions.

\end{document}